\providecommand{\U}[1]{\protect\rule{.1in}{.1in}}
\theoremstyle{plain}
\newtheorem{corollary}{Corollary}
\newtheorem{lemma}{Lemma}
\newtheorem{remark}{Remark}
\newtheorem{theorem}{Theorem}
\numberwithin{equation}{section}
\begin{document}
\title[Approximate Equivalence]{Approximate Equivalence in von Neumann Algebras}
\author{Qihui Li}
\address{East China University of Science and Technology, Shanghai, China}
\email{lqh991978@gmail.com}
\author{Don Hadwin}
\address{Mathematics Department, University of New Hampshire}
\email{operatorguy@gmail.com}
\author{Wenjing Liu}
\address{Mathematics Department, University of New Hampshire}
\email{wenjingtwins87@gmail.com}
\thanks{This author was partially supported by NSFC(Grant No.11671133).}
\thanks{This author was supported by a Collaboration Grant from the Simons Foundation}
\thanks{This author is supported by a grant from the Eric Nordgren Research Felloship Fund}
\thanks{This paper is in final form and no version of it will be submitted for
publication elsewhere.}
\subjclass{ }
\keywords{}
\dedicatory{Dedicated to Lyra.}
\begin{abstract}
Suppose $\mathcal{A}$ is a separable unital ASH C*-algebra, $\mathcal{R}$ is a
sigma-finite II$_{\infty}$ factor von Neumann algebra, and $\pi,\rho
:\mathcal{A}\rightarrow\mathcal{R}$ are unital $\ast$-homomorphisms such that,
for every $a\in\mathcal{A}$, the range projections of $\pi\left(  a\right)  $
and $\rho\left(  a\right)  $ are Murray von Neuman equivalent in $\mathcal{R}%
$. We prove that $\pi$ and $\rho$ are approximately unitarily equivalent
modulo $\mathcal{K}_{\mathcal{R}}$, where $\mathcal{K}_{\mathcal{R}}$ is the
norm closed ideal generated by the finite projections in $\mathcal{R}$. We
also prove a very general result concerning approximate equivalence in
arbitrary finite von Neumann algebras.

\end{abstract}
\maketitle

\section{Introduction}

In 1977 D. Voiculescu \cite{V} proved a remarkable theorem concerning
approximate (unitary) equivalence of representations of a separable unital
C*-algebra on a separable Hilbert space. The beauty of the theorem is that the
characterization was in terms of purely algebraic terms. This was made
explicit in the reformulation of Voiculescu's theorem in \cite{H} in terms of rank.

\begin{theorem}
\label{rank}\cite{V}Suppose $B\left(  H\right)  $ is the set of operators on a
separable Hilbert space $H$ and $\mathcal{K}\left(  H\right)  $ is the ideal
of compact operators. Suppose $\mathcal{A}$ is a separable unital C*-algebra,
and $\pi,\rho:\mathcal{A}\rightarrow B\left(  H\right)  $ are unital $\ast
$-homomorphisms. The following are equivalent:

\begin{enumerate}
\item There is a sequence $\left\{  U_{n}\right\}  $ of unitary operators in
$B\left(  H\right)  $ such that

\begin{enumerate}
\item $U_{n}\pi\left(  a\right)  U_{n}^{\ast}-\rho\left(  a\right)
\in\mathcal{K}\left(  H\right)  $ for every $n\in\mathbb{N}$ and every
$a\in\mathcal{A}$.

\item $\left\Vert U_{n}\pi\left(  a\right)  U_{n}^{\ast}-\rho\left(  a\right)
\right\Vert \rightarrow0$ for every $a\in\mathcal{A}$.
\end{enumerate}

\item There is a sequence $\left\{  U_{n}\right\}  $ of unitary operators in
$B\left(  H\right)  $ such that, for every $a\in\mathcal{A}$,%
\[
\left\Vert U_{n}\pi\left(  a\right)  U_{n}^{\ast}-\rho\left(  a\right)
\right\Vert \rightarrow0.
\]

\item For every $a\in\mathcal{A}$,
\[
\text{\textrm{rank}}\left(  \pi\left(  a\right)  \right)  =\text{\textrm{rank}%
}\left(  \rho\left(  a\right)  \right)  \text{ .}%
\]

\end{enumerate}
\end{theorem}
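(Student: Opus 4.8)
The plan is to treat $(1)\Rightarrow(2)$ as trivial (the same sequence $\{U_n\}$ witnesses both), to prove $(2)\Rightarrow(3)$ by elementary means, and to obtain $(3)\Rightarrow(1)$ by feeding the rank hypothesis into Voiculescu's theorem, which supplies the intertwining unitaries.

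For $(2)\Rightarrow(3)$ I would use two properties of $\mathrm{rank}(T):=\dim\overline{TH}$: it is invariant under unitary conjugation, and it is lower semicontinuous for norm convergence. Only the second needs an argument. If $T_n\to T$ in norm and $m\le\mathrm{rank}(T)$ is finite, choose an $m$-dimensional subspace $M$ on which $T$ is bounded below, let $Q$ be the orthogonal projection onto $M$ and $P$ the projection onto $TM$; then $PTQ$ restricts to an invertible operator $QH\to PH$, so $PT_nQ$ is invertible $QH\to PH$ for all large $n$, whence $\mathrm{rank}(T_n)\ge m$. Thus $\mathrm{rank}(T)\le\liminf_n\mathrm{rank}(T_n)$. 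Applying this to $U_n\pi(a)U_n^{\ast}\to\rho(a)$ gives $\mathrm{rank}(\rho(a))\le\mathrm{rank}(\pi(a))$, and applying it to $U_n^{\ast}\rho(a)U_n\to\pi(a)$ gives the reverse inequality; this is $(3)$.

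For $(3)\Rightarrow(1)$ the strategy is to recover from the numerical data in $(3)$ exactly the unitary invariants that Voiculescu's theorem asks of $\pi$ and $\rho$. First, $a\in\ker\pi$ iff $\mathrm{rank}(\pi(a^{\ast}a))=0$, so $(3)$ forces $\ker\pi=\ker\rho$. Next, for $a=a^{\ast}\ge0$ the operator $\pi(a)$ is compact iff every spectral projection $\chi_{[\varepsilon,\infty)}(\pi(a))$ has finite rank, and since $\chi_{[\varepsilon,\infty)}$ is dominated pointwise by a continuous function $f$ that vanishes near $0$, this is controlled by $\mathrm{rank}(\pi(f(a)))$; hence $(3)$ gives $J:=\pi^{-1}(\mathcal K(H))=\rho^{-1}(\mathcal K(H))$. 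Finally, $\pi|_J$ and $\rho|_J$ are representations into $\mathcal K(H)$, and on their essential subspaces such representations are orthogonal sums of multiples of irreducibles; I would argue that the multiplicity of each irreducible summand is read off from the numbers $\mathrm{rank}(\pi(b))$, $b\in J$, by testing against elements of $J$ that isolate the corresponding point of the spectrum, so that the essential parts of $\pi|_J$ and $\rho|_J$ are unitarily equivalent. Thus $\pi$ and $\rho$ have the same kernel, the same ``compact ideal'' $J$, and unitarily equivalent essential compact parts --- precisely the hypotheses under which Voiculescu's theorem \cite{V} produces unitaries $U_n$ with $U_n\pi(a)U_n^{\ast}-\rho(a)\in\mathcal K(H)$ and $\|U_n\pi(a)U_n^{\ast}-\rho(a)\|\to0$ for all $a$, which is $(1)$.

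I expect the crux to be this last passage in $(3)\Rightarrow(1)$. Matching kernels and compact ideals from rank is routine, but reconstructing the full multiplicity function of the compact part from rank alone takes care, and one must invoke Voiculescu's theorem in the right generality: the ``noncompact remainders'' of $\pi$ and $\rho$ --- the restrictions to the orthocomplements of $\overline{\pi(J)H}$ and $\overline{\rho(J)H}$ --- need not themselves be disjoint from $\mathcal K(H)$ nor have equal kernels, so one cannot simply cite the bare absorption statement; one must use the form of the theorem that, given agreement of the kernels and of the essential compact parts, builds all the unitaries $U_n$ at once. That construction --- producing $U_n$ from a quasicentral approximate unit for $\mathcal K(H)$ relative to $\mathcal A$ --- is the genuine analytic content, which I would quote from \cite{V} rather than reprove; the reformulation of the conclusion in terms of rank is due to \cite{H}.
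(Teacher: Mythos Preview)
The paper does not give its own proof of this theorem: it is stated as Voiculescu's result \cite{V}, in the rank formulation due to \cite{H}, and serves as background for the paper's actual contributions. What the paper does provide is a description of the proof's architecture (in the paragraph beginning ``The proof of Voiculescu's theorem (Theorem \ref{rank}) has two parts''): one splits $\pi=\pi_{\mathcal K(H)}\oplus\pi_1$ and $\rho=\rho_{\mathcal K(H)}\oplus\rho_1$ along the subspaces $H_{\pi,\mathcal K(H)}$ and $H_{\rho,\mathcal K(H)}$; the ``easy part'' is that the rank condition forces $\pi_{\mathcal K(H)}$ and $\rho_{\mathcal K(H)}$ to be unitarily equivalent, and the ``hard part'' is the absorption statement $id_{\mathcal A}\oplus\sigma\sim_a id_{\mathcal A}$ $(\mathcal K(H))$ whenever $\mathcal K(H)\subset\ker\sigma$, which yields $\pi\sim_a\pi\oplus\rho_1$ and $\rho\sim_a\rho\oplus\pi_1$ modulo $\mathcal K(H)$.

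Your sketch follows exactly this architecture. The argument for $(2)\Rightarrow(3)$ via lower semicontinuity of rank is correct and standard. For $(3)\Rightarrow(1)$ you correctly isolate the three ingredients to be extracted from the rank data --- equality of kernels, equality of $\pi^{-1}(\mathcal K(H))$ and $\rho^{-1}(\mathcal K(H))$, and unitary equivalence of the compact parts via the multiplicity decomposition of representations into $\mathcal K(H)$ --- and then defer the analytic content to Voiculescu's absorption theorem, just as the paper does. Your closing caveat is slightly overcautious: by construction of $H_{\pi,\mathcal K(H)}$, the remainder $\rho_1$ annihilates the ideal $J=\pi^{-1}(\mathcal K(H))=\rho^{-1}(\mathcal K(H))$, which is exactly the hypothesis needed to invoke absorption in the form the paper states; one does not need a stronger version of \cite{V}.
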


If $\pi:\mathcal{A}\rightarrow B\left(  H\right)  $ is a unital $\ast
$-homomorphism, we will write $\pi\sim_{a}\rho$ in $B\left(  H\right)  $ to
mean that statement $\left(  2\right)  $ in the preceding theorem holds and we
will write $\pi\sim_{a}\rho$ ( $\mathcal{K}\left(  H\right)  $) in $B\left(
H\right)  $ to indicate statements $\left(  1\right)  $ and $\left(  2\right)
$ hold. When the C*-algebra $\mathcal{A}$ is not separable, $\pi\sim_{a}\rho$
means that there is a \emph{net} of unitaries $\left\{  U_{\lambda}\right\}  $
such that, for every $a\in\mathcal{A}$, $\left\Vert U_{\lambda}\pi\left(
a\right)  U_{\lambda}^{\ast}-\rho\left(  a\right)  \right\Vert \rightarrow0.$
It was shown in \cite{H} that $\pi\sim_{a}\rho$ if and only if \textrm{rank}%
$\left(  \pi\left(  a\right)  \right)  =$ \textrm{rank}$\left(  \rho\left(
a\right)  \right)  $ always holds even when $\mathcal{A}$ or $H$ is not
separable, where, for $T\in B\left(  H\right)  $, \textrm{rank}$\left(
T\right)  $ is the Hilbert-space dimension of the projection $\mathfrak{R}%
\left(  T\right)  $ onto the closure of the range of $T$.

Later Huiru Ding and the first author \cite{DH} extended the notion of rank to
operators in a von Neumann algebra $\mathcal{M}$, i.e., if $T\in\mathcal{M}$,
then $\mathcal{M}$-\textrm{rank}$\left(  T\right)  $ is the Murray von Neumann
equivalence class of the projection $\mathcal{\mathfrak{R}}\left(  T\right)
$. If $p$ and $q$ are projections in a C*-algebra $\mathcal{W}$, we write
$p\sim q$ in $\mathcal{W}$ to mean that there is a partial isometry
$v\in\mathcal{W}$ such that $v^{\ast}v=p$ and $vv^{\ast}=q$. Thus
$\mathcal{M}$-\textrm{rank}$\left(  T\right)  =\mathcal{M}$-\textrm{rank}%
$\left(  S\right)  $ if and only if $\mathfrak{R}\left(  S\right)
\sim\mathfrak{R}\left(  T\right)  $. In \cite{DH} they extended Voiculescu's
theorem for representations of a separable AH C*-algebra into a von Neumann
algebra on a separable Hilbert space, i.e., $\pi\sim\rho$ in $\mathcal{M}$ if
and only if, for every $a$,%
\[
\mathcal{M}\text{-}\mathrm{rank}\left(  \pi\left(  a\right)  \right)
=\mathcal{M}\text{-}\mathrm{rank}\left(  \rho\left(  a\right)  \right)
\text{.}%
\]
When the algebra $\mathcal{A}$ is ASH, their characterization works when the
von Neumann algebra is a $II_{1}$ factor \cite{DH} (See Theorem \ref{SAFE}.)

When $\mathcal{M}$ is a $\sigma$-finite type $II_{\infty}$ factor with a
faithful normal tracial weight $\tau$, there are analogues $\mathcal{F}%
_{\mathcal{M}}$ and $\mathcal{K}_{\mathcal{M}}$ of the finite rank operators
and compact operators; namely, $\mathcal{F}_{\mathcal{M}}$ is the ideal
generated by the projections $P\in\mathcal{M}$ with $\tau\left(  P\right)
<\infty$, and $\mathcal{K}_{\mathcal{M}}$ is the norm closure of
$\mathcal{F}_{\mathcal{M}}$. It happens that $\mathcal{F}_{\mathcal{M}}$ is
contained in every nonzero ideal of $\mathcal{M}$ and $\mathcal{K}%
_{\mathcal{M}}$ is the only nontrivial norm closed ideal of $\mathcal{M}$.
$\pi,\rho:\mathcal{A}\rightarrow\mathcal{M}$ are unital $\ast$-homomorphisms,
then $\pi\sim_{a}\rho$ ($\mathcal{K}_{\mathcal{M}}$) if and only if the
sequence $\left\{  U_{n}\right\}  $ of unitary operators in $\mathcal{M}$ for
which part $\left(  2\right)  $ in Theorem Qihui Li, Junhao Shen, and Rui Shi
holds can be chosen so that, for every $n\in\mathbb{N}$ and every
$a\in\mathcal{A}$,%
\[
U_{n}\pi\left(  a\right)  U_{n}^{\ast}-\rho\left(  a\right)  \in
\mathcal{K}_{\mathcal{M}}\text{ .}%
\]
This naturally leads to the following question:

\textbf{Question.} Suppose $\mathcal{M}$ is a $\sigma$-finite $II_{1}$ factor
von Neumann algebra, $\mathcal{A}$ is a separable unital C*-algebra, and
$\pi,\rho:\mathcal{A}\rightarrow\mathcal{M}$ are unital $\ast$-homomorphisms.
If $\pi\sim_{a}\rho$ in $\mathcal{M}$, does it follow that $\pi\sim_{a}\rho$
($\mathcal{K}_{\mathcal{M}}$)?

In \cite{HS} Rui Shi and the first author gave an affirmative answer to this
question when $\mathcal{A}$ is commutative. More recently this result has been
extended by Shilin Wen, Junsheng Fang and Rui Shi \cite{FSW} to the case when
$\mathcal{A}$ is an AF C*-algebra, and in \cite{SR} to the case when
$\mathcal{A}$ is AH. In this paper we extend the results to the case when
$\mathcal{A}$ is a ASH C*-algebra (Theorem \ref{ASHmain}) . We also extend the
results in some of the results in \cite{DH} and \cite{FSW} for arbitrary
finite von Neumann algebras and for the new class of \emph{strongly
LF-embeddable} C*-algebras (Theorem \ref{SAFE} and Theorem \ref{summand}).

The proof of Voiculescu's theorem (Theorem \ref{rank}) has two parts. The
'hard" part is showing that if $\mathcal{A}\subset B\left(  \ell^{2}\right)  $
is a separable unital C*-algebra, and $\pi:\mathcal{A}\rightarrow B\left(
\ell^{2}\right)  $ is a unital $\ast$-homomorphism such that $\mathcal{K}%
\left(  \ell^{2}\right)  \subset\ker\pi,$ then%
\[
id_{\mathcal{A}}\oplus\pi\sim_{a}id_{\mathcal{A}}\text{ (}\mathcal{K}\left(
\ell^{2}\right)  \text{).}%
\]
The "easy part" involves the compact operators. Suppose $\mathcal{A}$ is a
separable unital C*-algebra and $\pi:\mathcal{A\rightarrow B}\left(  \ell
^{2}\right)  $ is a unital $\ast$-homomorphism. Then $\sup\left\{
\mathcal{\mathfrak{R}}\left(  \pi\left(  a\right)  \right)  :\pi\left(
a\right)  \in\mathcal{K}\left(  \ell^{2}\right)  \right\}  $ reduces $\pi$ and
leads to a decomposition
\[
\pi=\pi_{0}\oplus\pi_{1}.
\]
The "easy part" says that if $\pi\sim_{a}\rho$, then $\pi_{0}$ and $\rho_{0}$
must be unitarily equivalent. In \cite{LSS} Qihui Li, Junhao Shen, and Rui Shi
prove an analogue of the "hard part" when $\mathcal{A}$ is nuclear for a
$II_{\infty}$ factor. The "easy part" for von Neumann algebras is much more
difficult for $II_{\infty}$ factors because, there is a complete
characterization of C*-subalgebras of $\mathcal{K}\left(  \ell^{2}\right)  $
and their representations which is totally lacking for C*-subalgebras of
$\mathcal{K}_{\mathcal{M}}$. In fact, the "easy part" isn't true for a
$II_{\infty}$ factor. The analogue must be
\[
\pi_{0}\sim_{a}\rho_{0}\text{ (}\mathcal{K}_{\mathcal{M}}\text{).}%
\]
We prove a general analogue of the "easy part" for $II_{\infty}$ factors when
the C*-algebra is ASH (Theorem \ref{main}).

\section{Finite von Neumann Algebras}

A separable C*-algebra is AF if it is a direct limit of finite-dimensional
C*-algebras. A separable C*-algebra is \emph{homogeneous} if it is a finite
direct sum of algebras of the form $\mathbb{M}_{n}\left(  C\left(  X\right)
\right)  ,$ where $X$ is a compact metric space. A unital C*-algebra is
\emph{subhomogeneous} if there is an $n\in\mathbb{N}$, such that every
representation is on a Hilbert space of dimension at most $n$; equivalently,
if $x^{n}=0$ for every nilpotent $x\in\mathcal{A}$. Every subhomogeneous
algebra is a subalgebra of a homogeneous one. Every subhomogeneous von Neumann
algebra is homogeneous; in particular, if $\mathcal{A}$ is subhomogeneous,
then $\mathcal{A}^{\# \#}$ is homogeneous. A C*-algebra is approximately
subhomogeneous (ASH) if it is a direct limit of subhomogeneous C*algebras.

There has been a lot of work determining which separable C*-algebras are
AF-embeddable. A (possibly nonseparable) C*-algebra $\mathcal{B}$ is \emph{LF}
if, for every finite subset $F\subset\mathcal{B}$ and every $\varepsilon>0$
there is a finite-dimensional C*-algebra $\mathcal{D}$ of $\mathcal{B}$ such
that, for every $b\in F$, \textrm{dist}$\left(  b,\mathcal{D}\right)
<\varepsilon$. Every separable unital C*-subalgebra of a LF C*-algebra is
contained in a separable AF subalgebra.

We are interested in a more general property. We say that a unital C*-algebra
$\mathcal{A}$ is \emph{strongly LF-embeddable} if there is an LF C*-algebra
$\mathcal{B}$ such that $\mathcal{A}\subset\mathcal{B}\subset\mathcal{A}^{\#
\#}$. It is easily shown that an ASH algebra is strongly LF-embeddable, i.e.,
if $\left\{  \mathcal{A}_{\lambda}\right\}  $ is an increasingly directed
family of subhomogeneous C*-algebas and $\mathcal{A}=\left(  \cup_{\lambda
}\mathcal{A}_{\lambda}\right)  ^{-\left\Vert {}\right\Vert }$, then
$\mathcal{A}\subset\left(  \cup_{\lambda}\mathcal{A}_{\lambda}^{\# \#}\right)
^{-\left\Vert {}\right\Vert }\subset\mathcal{A}^{\# \#}$.

\begin{lemma}
\label{LF}Suppose $\mathcal{B}$ is a unital LF C*-algebra and $\mathcal{D}$
$=$ $\mathbb{M}_{n_{1}}\left(  \mathbb{C}\right)  \oplus\cdots\oplus
\mathbb{M}_{nk}\left(  \mathbb{C}\right)  $ and $\mathcal{W}$ is a unital C*-algebra.

\begin{enumerate}
\item If $\pi,\rho:\mathcal{D}\rightarrow\mathcal{W}$ are unital $\ast
$-homomorphisms and $\pi\left(  e_{11,s}\right)  \sim\rho\left(
e_{11,s}\right)  $ for $1\leq s\leq k,$ where $\left\{  e_{ij,s}\right\}  $ is
the system of matrix units for $\mathbb{M}_{ns}\left(  \mathbb{C}\right)  $,
then $\pi$ and $\rho$ are unitarily equivalent in $\mathcal{W}$.

\item If $\pi,\rho:\mathcal{B}\rightarrow\mathcal{W}$ are unital $\ast
$-homomorphisms such that $\pi\left(  p\right)  \sim\rho\left(  p\right)  $ in
$\mathcal{W}$ for every projection $p\in\mathcal{B}$, then $\pi\sim_{a}\rho$
in $\mathcal{W}$.
\end{enumerate}
\end{lemma}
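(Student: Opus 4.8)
For part (1), the plan is to handle a single matrix block $\mathbb{M}_{n_s}(\mathbb{C})$ first and then take the direct sum. Fix $s$ and write $e_{ij}=e_{ij,s}$, $p_i=\pi(e_{ii})$, $q_i=\rho(e_{ii})$. The hypothesis gives a partial isometry $v_1$ with $v_1^{\ast}v_1=p_1$, $v_1v_1^{\ast}=q_1$. I would then set $v_i=\rho(e_{i1})\,v_1\,\pi(e_{1i})$ and check that each $v_i$ is a partial isometry from $p_i$ to $q_i$, that the $v_i$ have orthogonal ranges and orthogonal domains (using $\pi(e_{ii})\pi(e_{jj})=0=\rho(e_{ii})\rho(e_{jj})$ for $i\ne j$), so that $U_s=\sum_{i=1}^{n_s} v_i$ is a partial isometry with $U_s^{\ast}U_s=\pi(1_{\mathbb{M}_{n_s}})$ and $U_sU_s^{\ast}=\rho(1_{\mathbb{M}_{n_s}})$. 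A direct computation then shows $U_s\pi(e_{ij})=\rho(e_{ij})U_s$ for all $i,j$, i.e. $U_s$ intertwines $\pi$ and $\rho$ on this block. Summing over $s$ (the blocks are orthogonal on both sides since $\pi,\rho$ are $\ast$-homomorphisms on the direct sum, and $\pi,\rho$ are unital so the block identities add to $1$) yields a unitary $U=\sum_s U_s\in\mathcal{W}$ with $U\pi(d)=\rho(d)U$ for all $d\in\mathcal{D}$. This is the routine ``matrix units transfer'' argument; no real obstacle here.

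For part (2), fix a finite set $F\subset\mathcal{B}$ and $\varepsilon>0$; I must produce a unitary $U\in\mathcal{W}$ with $\|U\pi(b)U^{\ast}-\rho(b)\|<\varepsilon$ for all $b\in F$. Since $\mathcal{B}$ is LF, choose a finite-dimensional subalgebra $\mathcal{D}\subset\mathcal{B}$ with $\mathrm{dist}(b,\mathcal{D})<\delta$ for each $b\in F$, where $\delta$ is to be chosen small compared to $\varepsilon$. The key point is that I want to apply part (1) to $\pi|_{\mathcal{D}}$ and $\rho|_{\mathcal{D}}$, which requires $\pi(e_{11,s})\sim\rho(e_{11,s})$ in $\mathcal{W}$ for the minimal projections of each block of $\mathcal{D}$ — and this is exactly what the hypothesis $\pi(p)\sim\rho(p)$ for \emph{every} projection $p\in\mathcal{B}$ delivers (the $e_{11,s}$ are projections in $\mathcal{D}\subset\mathcal{B}$). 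One subtlety: the embedding $\mathcal{D}\hookrightarrow\mathcal{B}$ need not be unital, so I should pass to the corner: let $p_0=1_{\mathcal{D}}$, a projection in $\mathcal{B}$; then $\pi(p_0)\sim\rho(p_0)$ via some partial isometry $w$, and $\pi|_{\mathcal{D}}$, $\mathrm{Ad}(w^{\ast})\circ\rho|_{\mathcal{D}}$ are unital $\ast$-homomorphisms of $\mathcal{D}$ into the corners $\pi(p_0)\mathcal{W}\pi(p_0)$. Apply part (1) inside this corner to get a unitary $U_0\in\pi(p_0)\mathcal{W}\pi(p_0)$ intertwining $\pi|_{\mathcal{D}}$ with $\mathrm{Ad}(w^{\ast})\circ\rho|_{\mathcal{D}}$, and set $U=wU_0+(1-\rho(p_0))(1-\pi(p_0))$, checking it is a unitary in $\mathcal{W}$. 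Then $U\pi(d)U^{\ast}=\rho(d)$ exactly for all $d\in\mathcal{D}$.

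It remains to do the $\varepsilon/\delta$ bookkeeping: for $b\in F$ pick $d\in\mathcal{D}$ with $\|b-d\|<\delta$, and estimate
\[
\|U\pi(b)U^{\ast}-\rho(b)\|\le\|U\pi(b-d)U^{\ast}\|+\|U\pi(d)U^{\ast}-\rho(d)\|+\|\rho(d-b)\|<2\delta+0=2\delta,
\]
using that $\pi,\rho$ are contractive and $U$ is unitary. Choosing $\delta<\varepsilon/2$ finishes it; in the non-separable case this produces the required net of unitaries indexed by pairs $(F,\varepsilon)$. The only point requiring care — and the closest thing to an obstacle — is the non-unitality of the finite-dimensional approximants, handled by the corner argument above; everything else is the standard Voiculescu-type local approximation scheme.
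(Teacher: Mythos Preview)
Your approach is essentially the paper's: part (1) is the standard matrix-units transfer (the paper simply cites \cite{DH} for this; you spell it out), and part (2) is the same $(F,\varepsilon)$-net argument via a nearby finite-dimensional subalgebra and the triangle inequality. You are in fact more careful than the paper, which glosses over the non-unitality of $\mathcal{D}_\lambda\subset\mathcal{B}$.

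One small correction is needed: your formula $U=wU_0+(1-\rho(p_0))(1-\pi(p_0))$ is not a unitary when $\pi(p_0)\ne\rho(p_0)$, since $(1-\pi(p_0))(1-\rho(p_0))(1-\pi(p_0))\ne 1-\pi(p_0)$ in general. Replace the second summand by a partial isometry $w'$ with $(w')^{\ast}w'=1-\pi(p_0)$ and $w'(w')^{\ast}=1-\rho(p_0)$; such a $w'$ exists because $1-p_0$ is a projection in the unital algebra $\mathcal{B}$, so the hypothesis gives $\pi(1-p_0)\sim\rho(1-p_0)$. With $U=wU_0+w'$ the verification goes through.
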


\begin{proof}
$\left(  1\right)  $ Since $e_{ii,s}\sim e_{11,s}$ in $\mathcal{D}$ for $1\leq
i\leq n_{s}$ and $1\leq s\leq k$, we see that $\pi\left(  e_{ii,s}\right)
\sim\rho\left(  e_{ii,s}\right)  $ in $\mathcal{W}$ for $1\leq i\leq n_{s}$
and $1\leq s\leq k.$ It follows from \cite[Theorem 2]{DH} that $\pi$ and
$\rho$ are unitarily equivalent in $\mathcal{W}$.

$\left(  2\right)  $ Suppose $\Lambda$ is the set of all pairs $\lambda
=\left(  F_{\lambda},\varepsilon_{\lambda}\right)  $ with $F_{\lambda}$ a
finite subset of $\mathcal{B}$ and $\varepsilon_{\lambda}>0$. Clearly
$\Lambda$ is directed by $\left(  \subset,\geq\right)  $. For $\lambda
\in\Lambda$, we can choose a finite-dimensional algebra $\mathcal{D}_{\lambda
}\subset\mathcal{B}$ such that, for every $x\in F_{\lambda},$ \textrm{dist}%
$\left(  x,\mathcal{D}_{\lambda}\right)  <\varepsilon_{\lambda}$. It follows
from part $\left(  1\right)  $ that there is a unitary operator $U_{\lambda
}\in\mathcal{W}$ such that, for every $x\in F_{\lambda}$, $U\pi\left(
x\right)  U^{\ast}=\rho\left(  x\right)  $. For each $a\in F_{\lambda,}we$ can
choose $x_{a}\in\mathcal{D}_{\lambda}$ such that $\left\Vert a-x_{a}%
\right\Vert <\varepsilon_{\lambda..}$ Hence, for every $a\in F_{\lambda}$%
\[
\left\Vert U_{\lambda}\pi\left(  a\right)  U_{\lambda}^{\ast}-\rho\left(
a\right)  \right\Vert =\left\Vert U_{\lambda}\pi\left(  a-x_{\lambda}\right)
U_{\lambda}^{\ast}-\rho\left(  a-x_{\lambda}\right)  \right\Vert
<2\varepsilon_{\lambda}.
\]
It follows that, for every $a\in\mathcal{A}$,%
\[
\lim_{\lambda}\left\Vert U_{\lambda}\pi\left(  a\right)  U^{\ast}-\rho\left(
a\right)  \right\Vert =0.
\]

\end{proof}

The key property of a finite von Neumann algebra $\mathcal{M}$ is that there
is a faithful normal tracial conditional expectation $\Phi$ from $\mathcal{M}$
to its center $\mathcal{Z}\left(  \mathcal{M}\right)  $, and that for
projections $p$ and $q$ in $\mathcal{M}$, we have $p$ and $q$ are Murray-von
Neumann equivalent if and only if $\Phi\left(  p\right)  =\Phi\left(
q\right)  $. Note that in the next lemma and the theorem that follows, there
is no separability assumption on the C*-algebra $\mathcal{A}$ or the dimension
of the Hilbert space on which $\mathcal{M}$ acts.

\begin{lemma}
\label{trace}Suppose $\mathcal{A}$ is a (possibly nonunital) C*-algebra,
$\mathcal{M}$ is a finite von Neumann algebra with center-valued trace
$\Phi:\mathcal{M}\rightarrow\mathcal{Z}\left(  \mathcal{M}\right)  $. If
$\pi,\rho:\mathcal{A}\rightarrow\mathcal{M}$ are $\ast$-homomorphisms such
that, for every $a\in\mathcal{A}$,%
\[
\mathcal{M}\text{-rank}\left(  \pi\left(  a\right)  \right)  =\mathcal{M}%
\text{-rank}\left(  \rho\left(  a\right)  \right)  ,
\]
then
\[
\Phi\circ\pi=\Phi\circ\rho\text{ .}%
\]

\end{lemma}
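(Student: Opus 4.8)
The plan is to reduce the statement to a question about self-adjoint elements and then use the spectral calculus together with the fact that $\mathcal{M}$-rank is determined by the center-valued trace $\Phi$. First I would observe that it suffices to prove $\Phi(\pi(a)^*\pi(a)) = \Phi(\rho(a)^*\rho(a))$ for every $a \in \mathcal{A}$, since every element of $\mathcal{A}$ is a linear combination of elements of the form $a^*a$ (indeed, of four positive elements), and both $\Phi \circ \pi$ and $\Phi \circ \rho$ are linear. So fix $a$ and set $x = \pi(a^*a) = \pi(a)^*\pi(a) \geq 0$ and $y = \rho(a^*a) = \rho(a)^*\rho(a) \geq 0$; I want $\Phi(x) = \Phi(y)$.

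The key point is to recover the trace of a positive operator from the Murray--von Neumann equivalence classes of the range projections of a family of elements built from it. For a positive element $x \in \mathcal{M}$ and $t > 0$, let $f_t$ be the continuous function that is $0$ on $[0,t]$, linear on $[t,2t]$, and $1$ on $[2t,\infty)$; then $f_t(x) \in C^*(x) \subset \mathcal{A}$'s image, and more to the point $x - t\cdot\mathbf 1$... — rather, I would use $x$ itself together with functional calculus applied to $x$: note $(x - t)_+ := g_t(x)$ where $g_t(s) = \max(s-t,0)$ is again in the C*-algebra generated by $x$, hence lies in $\pi(\mathcal{A})$ when $x = \pi(a^*a)$ (here I use that $\pi$ is a $*$-homomorphism, so $g_t(\pi(a^*a)) = \pi(g_t(a^*a))$, and $g_t(a^*a) \in \mathcal{A}$). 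The hypothesis then gives $\mathfrak{R}(g_t(\pi(a^*a))) \sim \mathfrak{R}(g_t(\rho(a^*a)))$, i.e. $\Phi(\chi_{(t,\infty)}(x)) = \Phi(e_t) = \Phi(\chi_{(t,\infty)}(y))$ for every $t > 0$, where $e_t$ is the spectral projection $\chi_{(t,\infty)}(x)$ and I am using that the range projection of $g_t(x)$ equals the spectral projection of $x$ for the interval $(t,\infty)$. Since $\Phi$ is a normal positive linear map into the center, I can then integrate: $\Phi(x) = \int_0^\infty \Phi(\chi_{(t,\infty)}(x))\,dt$ as a center-valued (weak-operator convergent) integral, and the same for $y$, so $\Phi(x) = \Phi(y)$.

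I expect the main obstacle to be bookkeeping rather than any deep difficulty: one must (i) justify the identity $\Phi(x) = \int_0^\infty \Phi(e_t)\,dt$ for the center-valued trace — this follows from the scalar case applied after composing with normal states on $\mathcal{Z}(\mathcal{M})$, or directly from normality of $\Phi$ and the spectral resolution $x = \int_0^{\|x\|} t\, de_t$ rewritten via Fubini as $\int_0^{\|x\|}(1 - e_t)\,... $ — and (ii) make sure $g_t(a^*a)$ genuinely lies in $\mathcal{A}$ even when $\mathcal{A}$ is nonunital, which is fine because $g_t(0) = 0$ so $g_t$ applied via continuous functional calculus stays inside the (nonunital) C*-algebra generated by $a^*a$. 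A small alternative to avoid the integral entirely: approximate $x$ uniformly by finite positive combinations $\sum_j c_j \chi_{(t_j,\infty)}(x)$ of spectral projections using a step-function approximation of the identity function on $[0,\|x\|]$, realize each such combination (up to $\varepsilon$) as $h_\varepsilon(x) = \pi(h_\varepsilon(a^*a))$ for a suitable continuous $h_\varepsilon$ vanishing at $0$, apply $\Phi$, and use $\|\Phi(x) - \Phi(h_\varepsilon(x))\| \leq \|x - h_\varepsilon(x)\| < \varepsilon$ together with the matching identity for $y$; letting $\varepsilon \to 0$ gives $\Phi(x) = \Phi(y)$. Either route reduces the whole lemma to the single observation that the range projection of $g_t$ of a positive operator is its spectral projection for $(t,\infty)$, which is where the hypothesis on $\mathcal{M}$-rank is used.
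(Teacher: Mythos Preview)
Your proposal is correct and follows essentially the same route as the paper's proof: both reduce to positive elements, apply the continuous function $g_t(s)=\max(s-t,0)$ (the paper writes it as $f_\alpha(t)=\mathrm{dist}(t,[0,\alpha])$, which is the same thing on $[0,1]$), use that $g_t(0)=0$ so the functional calculus stays inside the possibly nonunital $\mathcal{A}$, and identify $\mathfrak{R}(g_t(\pi(b)))$ with the spectral projection $\chi_{(t,\infty)}(\pi(b))$ to convert the rank hypothesis into $\Phi(\chi_{(t,\infty)}(\pi(b)))=\Phi(\chi_{(t,\infty)}(\rho(b)))$. The only cosmetic difference is the final step: the paper carries out exactly the step-function approximation you describe as your ``small alternative'' (Riemann sums $\sum \frac{k}{n}\chi_{(k/n,(k+1)/n]}$), whereas your primary suggestion packages the same computation as the layer-cake integral $\Phi(x)=\int_0^{\|x\|}\Phi(\chi_{(t,\infty)}(x))\,dt$; these are interchangeable.
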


\begin{proof}
We can extend $\pi$ and $\rho$ to weak*-weak* continuous *-homomorphisms
$\hat{\pi},\hat{\rho}:\mathcal{A}^{\# \#}\rightarrow\mathcal{M}$. Suppose
$x\in\mathcal{A}$ and $0\leq x\leq1$. Suppose $0<\alpha<1$ and define
$f_{\alpha}:\left[  0,1\right]  \rightarrow\left[  0,1\right]  $ by
\[
f\left(  t\right)  =dist\left(  t,\left[  0,\alpha\right]  \right)  .
\]
Since $f\left(  0\right)  =0,$ we see that $f\left(  x\right)  \in\mathcal{A}%
$, and $\chi_{(\alpha,1]}\left(  x\right)  =$ weak*-$\lim_{n\rightarrow\infty
}f\left(  x\right)  ^{1/n}\in\mathcal{A}^{\# \#}$, so
\[
\mathfrak{R}\left(  f\left(  x\right)  \right)  =\chi_{(\alpha,1]}\left(
x\right)  \text{ .}%
\]
It follows that%
\[
\hat{\pi}\left(  \chi_{(\alpha,1]}\left(  x\right)  \right)  =\mathfrak{R}%
\left(  \pi\left(  f_{\alpha}\left(  x\right)  \right)  \right)
=\chi_{(\alpha,1]}\left(  \pi\left(  x\right)  \right)  \text{ }%
\]
and%
\[
\hat{\rho}\left(  \chi_{(\alpha,1]}\left(  x\right)  \right)  =\mathfrak{R}%
\left(  \rho\left(  f_{\alpha}\left(  x\right)  \right)  \right)
=\chi_{(\alpha,1]}\left(  \rho\left(  x\right)  \right)  \text{.}%
\]
Hence
\[
\Phi\left(  \hat{\pi}\left(  \chi_{(\alpha,1]}\left(  x\right)  \right)
\right)  =\Phi\left(  \hat{\rho}\left(  \chi_{(\alpha,1]}\left(  x\right)
\right)  \right)  .
\]
Suppose $0<\alpha<\beta<1$. Since $\chi_{(\alpha,\beta]}=\chi_{(\alpha
,1]}-\chi_{(\beta,1]},$ we see that%
\[
\Phi\left(  \hat{\pi}\left(  \chi_{(\alpha,\beta]}\left(  x\right)  \right)
\right)  =\Phi\left(  \hat{\rho}\left(  \chi_{(\alpha,\beta]}\left(  x\right)
\right)  \right)  .
\]
Thus, for all $n\in\mathbb{N}$,%
\[
\Phi\left(  \hat{\pi}\left(  \sum_{k-1}^{n-1}\frac{k}{n}\chi_{(\frac{k}%
{n},\frac{k+1}{n}]}\left(  x\right)  \right)  \right)  =\Phi\left(  \hat{\rho
}\left(  \sum_{k-1}^{n-1}\frac{k}{n}\chi_{(\frac{k}{n},\frac{k+1}{n}]}\left(
x\right)  \right)  \right)  .
\]
Since, for every $n\in\mathbb{N}$,
\[
\left\Vert x-\sum_{k-1}^{n-1}\frac{k}{n}\chi_{(\frac{k}{n},\frac{k+1}{n}%
]}\left(  x\right)  \right\Vert \leq1/n\text{,}%
\]
it follows that
\[
\Phi\left(  \pi\left(  x\right)  \right)  =\Phi\left(  \hat{\pi}\left(
x\right)  \right)  =\Phi\left(  \hat{\rho}\left(  x\right)  \right)
=\Phi\left(  \rho\left(  x\right)  \right)  .
\]
Since $\mathcal{A}$ is the linear span of its positive contractions,
$\Phi\circ\pi=\Phi\circ\rho$.
\end{proof}

\begin{theorem}
\label{SAFE}Suppose $\mathcal{A}$ is strongly LF-embeddable, $\mathcal{M}$ is
a finite von Neumann algebra with center-valued trace $\Phi:\mathcal{M}%
\rightarrow\mathcal{Z}\left(  \mathcal{M}\right)  $. If $\pi,\rho
:\mathcal{A}\rightarrow\mathcal{M}$ are unital $\ast$-homomorphisms, then the
following are equivalent:

\begin{enumerate}
\item $\pi\sim_{a}\rho$ $\left(  \mathcal{M}\right)  .$

\item $\mathcal{M}$-rank$\left(  \pi\left(  a\right)  \right)  =\mathcal{M}%
$-rank$\left(  \rho\left(  a\right)  \right)  $ for every $a\in\mathcal{A}$.

\item $\Phi\circ\pi=\Phi\circ\rho.$
\end{enumerate}
\end{theorem}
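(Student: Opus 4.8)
The plan is to prove the four implications $(1)\Rightarrow(3)$, $(3)\Rightarrow(1)$, $(2)\Rightarrow(3)$, and $(3)\Rightarrow(2)$, which together give $(1)\Leftrightarrow(3)\Leftrightarrow(2)$; since $(2)\Rightarrow(3)$ is exactly Lemma \ref{trace}, only three implications need work, and all three are run through the canonical normal (weak$^{\ast}$--weak$^{\ast}$-continuous) unital extensions $\hat{\pi},\hat{\rho}:\mathcal{A}^{\# \#}\rightarrow\mathcal{M}$ of $\pi$ and $\rho$.

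For $(1)\Rightarrow(3)$: because $\Phi$ is a center-valued trace, $\Phi(u x u^{\ast})=\Phi(x)$ for every unitary $u\in\mathcal{M}$, and $\Phi$ is norm-contractive; so if $\{U_{\lambda}\}$ is a net of unitaries in $\mathcal{M}$ with $\Vert U_{\lambda}\pi(a)U_{\lambda}^{\ast}-\rho(a)\Vert\rightarrow0$ for each $a$, then $\Phi(\pi(a))=\Phi(U_{\lambda}\pi(a)U_{\lambda}^{\ast})\rightarrow\Phi(\rho(a))$, forcing $\Phi\circ\pi=\Phi\circ\rho$. For $(3)\Rightarrow(2)$: the maps $\Phi\circ\hat{\pi}$ and $\Phi\circ\hat{\rho}$ are normal on $\mathcal{A}^{\# \#}$ and agree on the weak$^{\ast}$-dense subalgebra $\mathcal{A}$, hence agree on all of $\mathcal{A}^{\# \#}$; for $a\in\mathcal{A}$ the range projection $\mathfrak{R}(a)\in\mathcal{A}^{\# \#}$ is the weak$^{\ast}$-limit of $(aa^{\ast})^{1/n}$, so normality gives $\hat{\pi}(\mathfrak{R}(a))=\mathfrak{R}(\pi(a))$ and $\hat{\rho}(\mathfrak{R}(a))=\mathfrak{R}(\rho(a))$, and therefore $\Phi(\mathfrak{R}(\pi(a)))=\Phi(\mathfrak{R}(\rho(a)))$; since $\mathcal{M}$ is finite this means $\mathfrak{R}(\pi(a))\sim\mathfrak{R}(\rho(a))$, i.e. $\mathcal{M}$-rank$(\pi(a))=\mathcal{M}$-rank$(\rho(a))$.

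The implication $(3)\Rightarrow(1)$ is the heart of the argument and is where strong LF-embeddability enters. Fix an LF C$^{\ast}$-algebra $\mathcal{B}$ with $\mathcal{A}\subset\mathcal{B}\subset\mathcal{A}^{\# \#}$ (sharing the unit of $\mathcal{A}$), and restrict the normal extensions to unital $\ast$-homomorphisms $\hat{\pi}|_{\mathcal{B}},\hat{\rho}|_{\mathcal{B}}:\mathcal{B}\rightarrow\mathcal{M}$. For every projection $p\in\mathcal{B}$ the computation in the previous paragraph gives $\Phi(\hat{\pi}(p))=\Phi(\hat{\rho}(p))$, so, $\mathcal{M}$ being finite, $\hat{\pi}(p)\sim\hat{\rho}(p)$ in $\mathcal{M}$. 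Lemma \ref{LF}(2) applied to $\hat{\pi}|_{\mathcal{B}}$ and $\hat{\rho}|_{\mathcal{B}}$ then produces a net $\{U_{\lambda}\}$ of unitaries in $\mathcal{M}$ with $\Vert U_{\lambda}\hat{\pi}(b)U_{\lambda}^{\ast}-\hat{\rho}(b)\Vert\rightarrow0$ for all $b\in\mathcal{B}$; specializing to $b=a\in\mathcal{A}$ yields $\pi\sim_{a}\rho$ in $\mathcal{M}$.

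The ingredients I treat as routine are the standard facts about the universal enveloping von Neumann algebra $\mathcal{A}^{\# \#}$ --- existence of the normal extension $\hat{\pi}$, the fact that $\hat{\pi}$ carries range projections to range projections, and that a normal map on $\mathcal{A}^{\# \#}$ is determined by its values on $\mathcal{A}$ --- together with the elementary equivalence, in a finite von Neumann algebra, between equality of center-valued traces of projections and their Murray--von Neumann equivalence. The step that most warrants care, and which I expect to be the main obstacle, is the reduction from $\mathcal{A}$ to the intermediate LF algebra $\mathcal{B}$: one must verify that $\mathcal{B}$ is unital with the unit of $\mathcal{A}$, so that $\hat{\pi}|_{\mathcal{B}}$ is a genuine unital $\ast$-homomorphism, and that condition $(3)$ really does propagate to all projections of $\mathcal{B}$ via normality, so that Lemma \ref{LF}(2) is literally applicable.
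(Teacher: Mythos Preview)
Your proposal is correct and follows essentially the same route as the paper: both prove $(1)\Rightarrow(3)$ via the tracial and norm-continuous properties of $\Phi$, invoke Lemma~\ref{trace} for $(2)\Rightarrow(3)$, deduce $(3)\Rightarrow(2)$ by passing to range projections through $(aa^{\ast})^{1/n}$ and normality of $\Phi$, and handle the key implication $(3)\Rightarrow(1)$ by extending to $\hat{\pi},\hat{\rho}$ on $\mathcal{A}^{\#\#}$, restricting to the intermediate LF algebra $\mathcal{B}$, checking $\hat{\pi}(p)\sim\hat{\rho}(p)$ for projections $p\in\mathcal{B}$, and applying Lemma~\ref{LF}(2). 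Your phrasing of $(3)\Rightarrow(2)$ through agreement of $\Phi\circ\hat{\pi}$ and $\Phi\circ\hat{\rho}$ on all of $\mathcal{A}^{\#\#}$ is a mild repackaging of the paper's direct limit computation in $\mathcal{M}$, but the content is identical.
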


\begin{proof}
$\left(  3\right)  \Rightarrow\left(  1\right)  $. We can extend $\pi$ and
$\rho$ to weak*-weak* continuous $\ast$-homomorphisms $\hat{\pi},\hat{\rho
}:\mathcal{A}^{\# \#}\rightarrow\mathcal{M}$. Since $\Phi$ is weak*-weak*
continuous, it follows that $\Phi\circ\hat{\pi}=\Phi\circ\hat{\rho}$. Since
$\mathcal{A}$ is strongly LF-embeddable, there is an LF algebra $\mathcal{B}$
such that $\mathcal{A}\subset\mathcal{B}\subset\mathcal{A}^{\# \#}$. For every
projection $p\in\mathcal{B}$ we have%
\[
\Phi\left(  \hat{\pi}\left(  p\right)  \right)  =\Phi\left(  \hat{\rho}\left(
p\right)  \right)  ,
\]
which implies that $\hat{\pi}\left(  p\right)  \sim\hat{\rho}\left(  p\right)
.$ Hence, by Lemma \ref{LF}, $\hat{\pi}|_{\mathcal{B}}\sim_{a}\hat{\rho
}|_{\mathcal{B}}$ in $\mathcal{M}$. Thus $\pi\sim_{a}\rho$ $\left(
\mathcal{M}\right)  $.

$\left(  1\right)  \Rightarrow\left(  3\right)  .$ Suppose $\left\{
U_{\lambda}\right\}  $ is a net of unitaries in $\mathcal{M}$ such that, for
every $a\in\mathcal{A}$,
\[
\left\Vert U_{\lambda}\pi\left(  a\right)  U_{\lambda}^{\ast}-\rho\left(
a\right)  \right\Vert \rightarrow0.
\]
Thus, since $\Phi$ is tracial and continuous,
\[
\Phi\left(  \rho\left(  a\right)  \right)  =\lim_{\lambda}\Phi\left(
U_{\lambda}\pi\left(  a\right)  U_{\lambda}^{\ast}\right)  =\Phi\left(
\pi\left(  a\right)  \right)  .
\]

$\left(  3\right)  \Rightarrow\left(  2\right)  .$ Assume $\left(  3\right)
.$ Then, for any $a\in\mathcal{A}$,%
\[
\Phi\left(  \mathfrak{R}\left(  \pi\left(  a\right)  \right)  \right)
=\lim_{n\rightarrow\infty}\Phi\left(  \pi\left(  \left(  aa^{\ast}\right)
^{1/n}\right)  \right)  =\lim_{n\rightarrow\infty}\Phi\left(  \rho\left(
\left(  aa^{\ast}\right)  ^{1/n}\right)  \right)  =\Phi\left(  \mathfrak{R}%
\left(  \pi\left(  a\right)  \right)  \right)  .
\]
Hence $\mathfrak{R}\left(  \pi\left(  a\right)  \right)  \sim\mathfrak{R}%
\left(  \rho\left(  a\right)  \right)  .$ Thus $\mathcal{M}$-rank$\left(
\pi\left(  a\right)  \right)  =\mathcal{M}$-rank$\left(  \rho\left(  a\right)
\right)  $.

$\left(  2\right)  \Rightarrow\left(  3\right)  .$ This is Lemma \ref{trace}.
\end{proof}

\bigskip

\begin{remark}
\label{nonunital}It is important to note that the proof of $\left(  2\right)
\Rightarrow\left(  3\right)  $ holds even when $\mathcal{A}$ is not unital.
\end{remark}

In \cite{H} it was shown that if $\mathcal{A}$ is a separable unital
C*-algebra and $\pi$ and $\rho$ are representations on a separable Hilbert
space such that, for every $x\in\mathcal{A}$
\[
\text{\textrm{rank}}\pi\left(  x\right)  \leq\text{\textrm{rank}}\rho\left(
x\right)  ,
\]
then there is a representation $\sigma$ such that%
\[
\pi\oplus\sigma\sim_{a}\rho.
\]
In \cite{HS}, Rui Shi and the first author proved an analogue for
representations of separable abelian C*-algebras into $II_{1}$ factor von
Neumann algebras. This result was extended by Shilin Wen, Junsheng Fang and
Rui Shi \cite{FSW} to separable AF C*-algebras. We extend this result further,
including separable ASH C*-algebras.\bigskip

\begin{theorem}
\label{summand}Suppose $\mathcal{A}$ is a separable strongly LF-embeddable
C*-algebra and $\mathcal{M}$ is a $II_{1}$ factor von Neumann algebra with a
faithful normal tracial state $\tau$. Suppose $P$ is a projection in
$\mathcal{M}$ and $\pi:\mathcal{A}\rightarrow P\mathcal{M}P$ and
$\rho:\mathcal{A}\rightarrow\mathcal{M}$ are unital $\ast$-homomorphisms such
that, for every $a\in\mathcal{A}$,%
\[
\mathcal{M}\text{-rank}\left(  \pi\left(  a\right)  \right)  \leq
\mathcal{M}\text{-rank}\left(  \rho\left(  a\right)  \right)  .
\]

Then there is a unital $\ast$-homomorphism $\sigma:\mathcal{A}\rightarrow
P^{\perp}\mathcal{M}P^{\perp}$ such that
\[
\pi\oplus\sigma\sim_{a}\rho\text{ }\left(  \mathcal{M}\right)  \text{ .}%
\]

\end{theorem}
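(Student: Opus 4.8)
The plan is to reduce the statement to producing $\sigma$ with the correct trace and then invoke Theorem \ref{SAFE}. Since $\mathcal{M}$ is a $II_{1}$ factor, its center-valued trace is $\Phi=\tau\left(  \cdot\right)  1$, so by the implication $\left(  3\right)  \Rightarrow\left(  1\right)  $ of Theorem \ref{SAFE} it suffices to produce a unital $\ast$-homomorphism $\sigma:\mathcal{A}\rightarrow P^{\perp}\mathcal{M}P^{\perp}$ with $\tau\circ\left(  \pi\oplus\sigma\right)  =\tau\circ\rho$ on $\mathcal{A}$. As $\pi$ and $\sigma$ have orthogonal ranges, $\tau\circ\left(  \pi\oplus\sigma\right)  =\tau\circ\pi+\tau\circ\sigma$, so the required condition is
\[
\tau\circ\sigma=\nu,\qquad\nu:=\tau\circ\rho-\tau\circ\pi\ \text{ on }\ \mathcal{A}.
\]
Note that $\nu$ is tracial (since $\tau$ is tracial and $\pi,\rho$ are $\ast$-homomorphisms) and $\nu\left(  1_{\mathcal{A}}\right)  =\tau\left(  1\right)  -\tau\left(  P\right)  =\tau\left(  P^{\perp}\right)  $.

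The first substantive point is that $\nu$ is \emph{positive}. Let $\hat{\pi},\hat{\rho}:\mathcal{A}^{\#\#}\rightarrow\mathcal{M}$ be the weak*-continuous extensions and set $\hat{\nu}:=\tau\circ\hat{\rho}-\tau\circ\hat{\pi}$, the normal extension of $\nu$. For $x\in\mathcal{A}$ with $x\geq0$ and $c\geq0$, the function $g_{c}\left(  t\right)  =\max\left(  t-c,0\right)  $ vanishes at $0$, so $g_{c}\left(  x\right)  \in\mathcal{A}$, and exactly as in the proof of Lemma \ref{trace} one has $\mathfrak{R}\left(  g_{c}\left(  x\right)  \right)  =\chi_{(c,\infty)}\left(  x\right)  $ in $\mathcal{A}^{\#\#}$. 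Hence
\[
\hat{\nu}\left(  \chi_{(c,\infty)}\left(  x\right)  \right)  =\tau\mathfrak{R}\left(  \rho\left(  g_{c}\left(  x\right)  \right)  \right)  -\tau\mathfrak{R}\left(  \pi\left(  g_{c}\left(  x\right)  \right)  \right)  \geq0
\]
by the rank hypothesis applied to $g_{c}\left(  x\right)  \in\mathcal{A}$. Since $x$ is a norm limit of positive linear combinations of the spectral projections $\chi_{(c,\infty)}\left(  x\right)  $, it follows that $\nu\left(  x\right)  =\hat{\nu}\left(  x\right)  \geq0$. Thus $\nu$ is a positive tracial functional on $\mathcal{A}$, and $\hat{\nu}$ is a positive normal tracial functional on $\mathcal{A}^{\#\#}$.

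Now I construct $\sigma$. If $P=1$, then $\nu\left(  1_{\mathcal{A}}\right)  =0$, so $\nu=0$ and $\sigma=0$ works; so assume $P\neq1$, making $\mathcal{N}:=P^{\perp}\mathcal{M}P^{\perp}$ a $II_{1}$ factor. Because $\mathcal{A}$ is separable and strongly LF-embeddable, fix an LF algebra $\mathcal{B}$ with $\mathcal{A}\subseteq\mathcal{B}\subseteq\mathcal{A}^{\#\#}$ and then a separable unital AF subalgebra $\mathcal{C}$ with $\mathcal{A}\subseteq\mathcal{C}\subseteq\mathcal{B}$; write $\mathcal{C}=\overline{\bigcup_{m}\mathcal{D}_{m}}$ with $\left\{  \mathcal{D}_{m}\right\}  $ an increasing sequence of finite-dimensional $\ast$-subalgebras, each containing $1_{\mathcal{A}}$. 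The restriction $\hat{\nu}|_{\mathcal{D}_{m}}$ is a positive tracial functional on $\mathcal{D}_{m}$ with $\hat{\nu}\left(  1_{\mathcal{A}}\right)  =\tau\left(  P^{\perp}\right)  $, so, $\mathcal{N}$ being a $II_{1}$ factor, there is a unital $\ast$-homomorphism $\sigma_{m}:\mathcal{D}_{m}\rightarrow\mathcal{N}$ with $\tau\circ\sigma_{m}=\hat{\nu}|_{\mathcal{D}_{m}}$ (choose orthogonal systems of projections and matrix units in $\mathcal{N}$ realizing the prescribed traces), and any two such are unitarily equivalent in $\mathcal{N}$ by \cite[Theorem 2]{DH} (equivalently Lemma \ref{LF}(1) with $\mathcal{W}=\mathcal{N}$, using that equal traces force Murray--von Neumann equivalence in the factor $\mathcal{N}$). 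In particular $\sigma_{m+1}|_{\mathcal{D}_{m}}$ is unitarily equivalent in $\mathcal{N}$ to $\sigma_{m}$, and the standard intertwining trick (choose unitaries $W_{j}\in\mathcal{N}$ with $W_{j}(\sigma_{j+1}|_{\mathcal{D}_{j}})W_{j}^{\ast}=\sigma_{j}$, then replace $\sigma_{m}$ by $(W_{1}\cdots W_{m-1})\sigma_{m}(W_{1}\cdots W_{m-1})^{\ast}$) produces a compatible family whose union extends by continuity to a unital $\ast$-homomorphism $\sigma:\mathcal{C}\rightarrow\mathcal{N}$ with $\tau\circ\sigma=\hat{\nu}|_{\mathcal{C}}$. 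Restricting to $\mathcal{A}$ gives $\tau\circ\sigma=\nu$; moreover $\sigma\left(  1_{\mathcal{A}}\right)  $ is a projection in the finite algebra $\mathcal{N}$ of trace $\tau\left(  P^{\perp}\right)  =\tau\left(  1_{\mathcal{N}}\right)  $, hence $\sigma\left(  1_{\mathcal{A}}\right)  =P^{\perp}$, so $\sigma$ is a unital $\ast$-homomorphism into $P^{\perp}\mathcal{M}P^{\perp}$. By the first paragraph, $\pi\oplus\sigma\sim_{a}\rho$ $\left(  \mathcal{M}\right)  $. The remaining degenerate case $P=0$ is the same, with $\mathcal{N}=\mathcal{M}$ and $\pi=0$.

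I expect the main obstacles to be the positivity of $\nu$ (where the functional-calculus device of Lemma \ref{trace} is exactly what is needed, and which is the only genuinely analytic input) and the passage from the finite-dimensional pieces $\sigma_{m}$ to a $\ast$-homomorphism on all of $\mathcal{C}$; the latter is the classical AF intertwining argument and is routine once uniqueness up to unitary equivalence of the $\sigma_{m}$ inside $\mathcal{N}$ is in hand.
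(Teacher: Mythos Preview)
Your proof is correct and follows essentially the same route as the paper: pass to a separable AF algebra sitting between $\mathcal{A}$ and $\mathcal{A}^{\#\#}$, convert the rank inequality into the trace inequality $\tau\circ\pi\leq\tau\circ\rho$ on positives, construct the complementary homomorphism on the AF algebra, and restrict to $\mathcal{A}$. The only difference is one of packaging: where the paper outsources the trace inequality to \cite{DH} and the existence of the complement to the AF result of \cite{FS}, you prove both inline---the first via the spectral-projection device of Lemma~\ref{trace}, the second by the standard finite-dimensional intertwining argument in a $II_{1}$ factor---so your argument is more self-contained but not conceptually different.
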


\begin{proof}
As in the proof of Theorem \ref{SAFE} choose a separable AF C*-algebra
$\mathcal{B}$ such that $\mathcal{A}\subset\mathcal{B}\subset\mathcal{A}^{\#
\#}$, and extend $\pi$ and $\rho$ to unital weak*-weak* continuous $\ast
$-homomorphisms $\hat{\pi}$ and $\hat{\rho}$ with domain $\mathcal{A}^{\# \#}%
$. It was shown in \cite{DH} that the condition on $\pi$ and $\rho$ is
equivalent to: for every $a\in\mathcal{M}$ with $0\leq a$, $\tau\left(
\pi\left(  a\right)  \right)  \leq\tau\left(  \rho\left(  a\right)  \right)
$. It follows from weak* continuity that, for every $a\in\mathcal{A}^{\# \#}$
with $0\leq a$, $\tau\left(  \hat{\pi}\left(  a\right)  \right)  \leq
\tau\left(  \hat{\rho}\left(  a\right)  \right)  $. In particular this holds
for $0\leq a\in\mathcal{B}$. However, since $\mathcal{B}$ is AF, it follows
from \cite{FS} that there is a unital $\ast$-homomorphism $\gamma
:\mathcal{B}\rightarrow P^{\perp}\mathcal{A}P^{\perp}$ such that%
\[
\left(  \hat{\pi}|_{\mathcal{B}}\right)  \oplus\gamma\sim_{a}\hat{\rho
}|_{\mathcal{B}}\text{ }\left(  \mathcal{M}\right)  \text{.}%
\]
If we let $\sigma=\gamma|_{\mathcal{A}}$, we see $\pi\oplus\sigma\sim_{a}\rho$
$\left(  \mathcal{M}\right)  $.
\end{proof}

\section{Representations of ASH algebras relative to ideals}

We prove a version of Voiculescu's theorem for representations of a separable
ASH C*-algebras into sigma-finite type $II_{\infty}$ factor von Neumann
algebras. We first prove a more general result. We begin with a probably
well-known lemma.

\begin{lemma}
\label{ideal}Suppose $\mathcal{J}$ is a norm closed two-sided ideal in a von
Neumann algebra $\mathcal{M}$ and $\mathcal{J}_{0}$ is the ideal in
$\mathcal{M}$ generated by the projections in $\mathcal{J}$. Suppose also that
$\mathcal{A}$ is a C*-algebra and $\pi,\rho:\mathcal{A}\rightarrow\mathcal{M}$
are unital $\ast$-homomorphisms. Then

\begin{enumerate}
\item $\mathcal{J}$ is the norm closed linear span of the set of projections
in $\mathcal{J}$, so
\[
\mathcal{J}_{0}^{-\left\Vert {}\right\Vert }=\mathcal{J}\text{,}%
\]

\item $\mathcal{J}_{0}=\left\{  T\in\mathcal{M}:T=PTP\text{ for some
projection }P\in\mathcal{J}\right\}  $,

\item $T\in\mathcal{J}_{0}$ if and only if $\chi_{\left(  0,\infty\right)
}\left(  \left\vert T\right\vert \right)  =\mathfrak{R}\left(  T\right)
\in\mathcal{J}_{0}$,

\item If $P$ and $Q$ are projections in $\mathcal{J}_{0}$ then $P\vee
Q=\mathfrak{R}\left(  P+Q\right)  \in\mathcal{J}_{0}$,

\item $\pi^{-1}\left(  \mathcal{J}_{0}\right)  ^{-\left\Vert {}\right\Vert
}=\pi^{-1}\left(  \mathcal{J}\right)  $,

\item If $\left\{  \mathcal{A}_{i}:i\in I\right\}  $ is an increasingly
directed family of unital C*-subalgebras of $\mathcal{A}$ and $\mathcal{A}%
=\left[  \cup_{i\in I}\mathcal{A}_{i}\right]  ^{-\left\Vert {}\right\Vert },$
then%
\[
\left[  \cup_{i\in I}\mathcal{A}_{i}\cap\pi^{-1}\left(  \mathcal{J}%
_{0}\right)  \right]  ^{-\left\Vert {}\right\Vert }=\pi^{-1}\left(
\mathcal{J}\right)  \text{ .}%
\]

\end{enumerate}
\end{lemma}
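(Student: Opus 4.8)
The plan is to prove the six items roughly in the order stated, since each relies on the previous ones. For $(1)$, I would use the fact that a norm-closed ideal $\mathcal{J}$ in a von Neumann algebra is hereditary and closed under continuous functional calculus vanishing at $0$: given a positive $T\in\mathcal{J}$, the spectral projections $\chi_{(\varepsilon,\infty)}(T)$ are dominated by $\varepsilon^{-1}T$, hence lie in $\mathcal{J}$ (since $\mathcal{J}$ is a hereditary $\ast$-subalgebra, it is the closure of its positive part, and order ideals of von Neumann algebras contain spectral projections of their positive elements), and $T=\lim_{\varepsilon\to 0}T\chi_{(\varepsilon,\infty)}(T)$ in norm. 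Writing $T$ as a norm-limit of finite real-linear combinations of these spectral projections via the usual step-function approximation of the continuous functional calculus gives that $\mathcal{J}$ is the norm-closed linear span of its projections, which is exactly $\mathcal{J}_0^{-\|\cdot\|}=\mathcal{J}$.

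For $(2)$, one inclusion is clear since $PTP$ with $P\in\mathcal{J}$ lies in $\mathcal{J}\cap\mathcal{M}=\mathcal{J}_0$... more precisely, if $T=PTP$ with $P\in\mathcal{J}$ a projection then $T=P T P\in\mathcal{M}P\mathcal{M}\subset\mathcal{J}_0$. Conversely, an element of $\mathcal{J}_0$ is a finite sum $\sum_i A_i P_i B_i$ with $P_i$ projections in $\mathcal{J}$; setting $P=\bigvee_i P_i$, which I claim is again in $\mathcal{J}$ (this is item $(4)$, or rather a finite version of it: $P_1\vee P_2=\mathfrak{R}(P_1+P_2)$ and $P_1+P_2\le 2P$ forces $\mathfrak{R}(P_1+P_2)\in\mathcal{J}$ by the hereditary/spectral argument again), we get $P\in\mathcal{J}$ with $PT P=T$. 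So $(4)$ should really be proved before or alongside $(2)$; I would state the spectral-projection lemma once and reuse it. Item $(3)$ is then immediate: if $T=PTP$ with $P\in\mathcal{J}$ then $\mathfrak{R}(T)\le P$ so $\mathfrak{R}(T)\in\mathcal{J}_0$ by heredity; conversely if $\mathfrak{R}(T)\in\mathcal{J}_0$ then $T=\mathfrak{R}(T)\,T\,\mathfrak{R}(T)$ (using $\mathfrak{R}(T)=\mathfrak{R}(T^*)$... actually $T=\mathfrak{R}(T)T$ and $T=T\,\mathfrak{R}(T^*)$, and one can symmetrize by taking $P=\mathfrak{R}(T)\vee\mathfrak{R}(T^*)\in\mathcal{J}_0$) so $T\in\mathcal{J}_0$ by $(2)$.

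For $(5)$: $\pi^{-1}(\mathcal{J})$ is a norm-closed ideal of $\mathcal{A}$ containing $\pi^{-1}(\mathcal{J}_0)$, so "$\supseteq$" after closure is trivial. For "$\subseteq$", take $a\in\pi^{-1}(\mathcal{J})$, $a\ge 0$; then $\pi(a)\in\mathcal{J}$, and by $(1)$ applied to $\mathcal{J}$ one approximates $\pi(a)$ in norm by elements of $\mathcal{J}_0$, but to stay in the image of $\pi$ I would instead use functional calculus: $a_\varepsilon=(a-\varepsilon)_+$ satisfies $a_\varepsilon\in\mathcal{A}$, $\|a-a_\varepsilon\|\le\varepsilon$ on the relevant spectral part... the cleaner route is $f_\varepsilon(a)$ where $f_\varepsilon(t)=\max(0,t-\varepsilon)$, and $\pi(f_\varepsilon(a))=f_\varepsilon(\pi(a))$ has range projection $\chi_{(\varepsilon,\infty)}(\pi(a))\in\mathcal{J}$ by $(1)$, hence $\pi(f_\varepsilon(a))\in\mathcal{J}_0$ by $(3)$, so $f_\varepsilon(a)\in\pi^{-1}(\mathcal{J}_0)$, and $f_\varepsilon(a)\to a$ in norm. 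Then use that $\mathcal{A}$ is spanned by its positive elements. Finally $(6)$ follows from $(5)$ plus a density argument: given $a\in\pi^{-1}(\mathcal{J})$ positive, by $(5)$ we may assume $\pi(a)\in\mathcal{J}_0$, so $P=\mathfrak{R}(\pi(a))\in\mathcal{J}$; pick $a_i\in\mathcal{A}_i$ with $a_i\to a$, $a_i\ge 0$; then $Pa_i P\to PaP=a$ (since $a=PaP$), and I need $Pa_iP$ or a nearby element to lie in some $\mathcal{A}_j\cap\pi^{-1}(\mathcal{J}_0)$ — the issue is $P\notin\mathcal{A}$. Instead, use functional calculus on $a_i$: $f_\varepsilon(a_i)\in\mathcal{A}_i$, and for $i$ large and $\varepsilon$ chosen with respect to $i$, $\pi(f_\varepsilon(a_i))=f_\varepsilon(\pi(a_i))$ with $\pi(a_i)$ close to $\pi(a)\in\mathcal{J}_0$; since $f_\varepsilon$ vanishes near $0$ and $\pi(a)$ has finite... no, rather: $f_\varepsilon(\pi(a))\in\mathcal{J}_0$ (range projection $\le P$), and $\|f_\varepsilon(\pi(a_i))-f_\varepsilon(\pi(a))\|$ is small when $\pi(a_i)$ is close to $\pi(a)$ (uniform continuity of $f_\varepsilon$), and $f_\varepsilon(\pi(a_i))=\pi(f_\varepsilon(a_i))$ — but I still need it in $\mathcal{J}_0$, not just near it. The fix: $f_\varepsilon(\pi(a_i))=g\cdot f_{\varepsilon/2}(\pi(a_i))$ for a suitable bounded $g$... actually simplest is to note $f_\varepsilon(\pi(a_i))\le \varepsilon^{-1}\cdot(\text{stuff})$ — I will instead argue that since $\pi(a)\in\mathcal{J}$ and $a_i\to a$, we have $\pi(a_i)\to\pi(a)\in\mathcal{J}$, and $\mathcal{J}$ being closed means for large $i$, $\pi(a_i)$ is within $\varepsilon$ of $\mathcal{J}$, but to land exactly in $\mathcal{J}_0$ I use that $\pi(h(a_i))$ for $h(t)=t^2(t+\varepsilon)^{-1}\cdot\mathbf{1}$... this is getting into routine estimates. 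The honest summary: the main obstacle is item $(6)$ (and to a lesser extent $(5)$), where one must produce elements lying \emph{exactly} in $\mathcal{A}_i\cap\pi^{-1}(\mathcal{J}_0)$ rather than merely close to $\pi^{-1}(\mathcal{J})$; the resolution is to apply a functional calculus $f_\varepsilon(t)=(t-\varepsilon)_+$ to the approximants $a_i$, observe $\pi(f_\varepsilon(a_i))$ has range projection dominated (for $i$ large relative to $\varepsilon$) by a projection in $\mathcal{J}$ obtained from $\pi(a)$, invoke $(3)$, and let $\varepsilon\to 0$ after $i\to\infty$. Everything else is a direct consequence of the single observation that norm-closed ideals in von Neumann algebras are hereditary and absorb spectral projections of their positive elements.
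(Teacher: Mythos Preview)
Your treatment of parts (1)--(5) is essentially correct and matches the paper's, which simply cites Kadison--Ringrose for (1)--(3) and gives a functional-calculus argument for (5) identical in spirit to yours. One small slip in your (2): taking $P=\bigvee_i P_i$ does \emph{not} give $PTP=T$ when $T=\sum_i A_iP_iB_i$, since $P$ need not absorb the flanking factors $A_i,B_i$; the standard fix is to note that each $A_iP_iB_i$ has range and corange projections Murray--von~Neumann equivalent to subprojections of $P_i$, hence in $\mathcal{J}$, and then set $P=\mathfrak{R}(T)\vee\mathfrak{R}(T^*)$.

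The real gap is in (6). Your proposed justification---that for $i$ large the range projection $\chi_{(\varepsilon,\infty)}(\pi(a_i))$ is \emph{dominated by} a spectral projection of $\pi(a)$---is false in general: spectral projections are not norm-continuous, and a unit vector in the range of $\chi_{(\varepsilon,\infty)}(\pi(a_i))$ need not lie in the range of any $\chi_{(\delta,\infty)}(\pi(a))$. Your approach \emph{can} be repaired by a heredity argument rather than a domination-of-projections argument: if $a,a_i\ge 0$, $\|\pi(a_i)-\pi(a)\|<\varepsilon/2$ and $P=\chi_{(\varepsilon,\infty)}(\pi(a_i))$, then $P\pi(a)P\ge(\varepsilon/2)P$, so $P\le(2/\varepsilon)P\pi(a)P\in\mathcal{J}$, whence $P\in\mathcal{J}$ and $f_\varepsilon(a_i)\in\mathcal{A}_i\cap\pi^{-1}(\mathcal{J}_0)$. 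But you did not supply this step, and without it the argument is incomplete.

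The paper takes a cleaner and genuinely different route for (6), bypassing spectral comparisons entirely. Let $\eta:\mathcal{M}\to\mathcal{M}/\mathcal{J}$ be the quotient map. Given $a\in\pi^{-1}(\mathcal{J})$ and $b\in\mathcal{A}_i$ with $\|a-b\|<\varepsilon$, one has $\|(\eta\circ\pi)(b)\|=\|(\eta\circ\pi)(b-a)\|\le\varepsilon$. Since a $\ast$-homomorphism of C*-algebras attains its quotient norm, there is $w\in\mathcal{A}_i$ with $(\eta\circ\pi)(w)=(\eta\circ\pi)(b)$ and $\|w\|\le\varepsilon$; then $z=b-w\in\mathcal{A}_i\cap\pi^{-1}(\mathcal{J})$ and $\|a-z\|<2\varepsilon$. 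Finally one applies (5) \emph{inside} $\mathcal{A}_i$ to replace $z$ by some $v\in\mathcal{A}_i\cap\pi^{-1}(\mathcal{J}_0)$ with $\|z-v\|\le\varepsilon$. This quotient-norm trick works uniformly, requires no positivity reduction, and avoids the delicate question of where the spectral projections of the approximants land.
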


\begin{proof}
$\left(  1\right)  ,\left(  2\right)  ,$ $\left(  3\right)  $ can be found in
\cite{KR2}.

$\left(  4\right)  .$ Suppose $a\in\pi^{-1}\left(  \mathcal{J}\right)  $. Then
$\pi\left(  a\right)  \in\mathcal{J}$, so
\[
\pi\left(  g_{\varepsilon}\left(  \left\vert a\right\vert \right)  \right)
=g_{\varepsilon}\left(  \left\vert \pi\left(  a\right)  \right\vert \right)
\chi_{\left(  \varepsilon/2,\infty\right)  }\left(  \left\vert \pi\left(
a\right)  \right\vert \right)  \in\mathcal{J}_{0},
\]
and
\[
\left\Vert a-ag_{\varepsilon}\left(  \left\vert a\right\vert \right)
\right\Vert \leq\varepsilon.
\]

$\left(  5\right)  .$ Let $\eta:\mathcal{M}\rightarrow\mathcal{M}/\mathcal{J}$
be the quotient map. Suppose $a\in\pi^{-1}\left(  \mathcal{J}\right)  $ and
$\varepsilon>0.$ Then there is an $i\in I$ and a $b\in\mathcal{A}_{i}$ such
that $\left\Vert a-b\right\Vert <\varepsilon$. Thus
\[
\left\Vert \left(  \eta\circ\left(  \pi|_{\mathcal{A}_{i}}\right)  \right)
\left(  b\right)  \right\Vert =\left\Vert \left(  \eta\circ\pi\right)  \left(
b\right)  \right\Vert =\left\Vert \left(  \eta\circ\pi\right)  \left(
b-a\right)  \right\Vert \leq\varepsilon,
\]
so there is a $w\in\mathcal{A}_{i}$ so that%
\[
\left\Vert w\right\Vert =\left\Vert \left(  \eta\circ\left(  \pi
|_{\mathcal{A}_{i}}\right)  \right)  \left(  w\right)  \right\Vert =\left\Vert
\left(  \eta\circ\left(  \pi|_{\mathcal{A}_{i}}\right)  \right)  \left(
b\right)  \right\Vert \leq\varepsilon.
\]
$z=b-w\in\ker\left(  \eta\circ\left(  \pi|_{\mathcal{A}_{i}}\right)  \right)
=\pi^{-1}\left(  \mathcal{J}\right)  \cap\mathcal{A}_{i}$, and $\left\Vert
b-z\right\Vert =\left\Vert w\right\Vert <\varepsilon$. It follows from part
$\left(  2\right)  $ that there is a $v\in\pi^{-1}\left(  \mathcal{J}%
_{0}\right)  \cap\mathcal{A}_{i}$ such that $\left\Vert z-v\right\Vert
\leq\varepsilon.$ Hence $\left\Vert a-v\right\Vert \leq\left\Vert
a-b\right\Vert +\left\Vert b-z\right\Vert +\left\Vert z-v\right\Vert
\leq3\varepsilon.$

$\left(  6\right)  .$ Let $\eta:\mathcal{M}\rightarrow\mathcal{M}/\mathcal{J}$
be the quotient map. Suppose $a\in\pi^{-1}\left(  \mathcal{J}\right)  $ and
$\varepsilon>0.$ Then there is an $i\in I$ and a $b\in\mathcal{A}_{i}$ such
that $\left\Vert a-b\right\Vert <\varepsilon$. Thus
\[
\left\Vert \left(  \eta\circ\left(  \pi|_{\mathcal{A}_{i}}\right)  \right)
\left(  b\right)  \right\Vert =\left\Vert \left(  \eta\circ\pi\right)  \left(
b\right)  \right\Vert =\left\Vert \left(  \eta\circ\pi\right)  \left(
b-a\right)  \right\Vert \leq\varepsilon,
\]
so there is a $w\in\mathcal{A}_{i}$ so that%
\[
\left\Vert w\right\Vert =\left\Vert \left(  \eta\circ\left(  \pi
|_{\mathcal{A}_{i}}\right)  \right)  \left(  w\right)  \right\Vert =\left\Vert
\left(  \eta\circ\left(  \pi|_{\mathcal{A}_{i}}\right)  \right)  \left(
b\right)  \right\Vert \leq\varepsilon.
\]
$z=b-w\in\ker\left(  \eta\circ\left(  \pi|_{\mathcal{A}_{i}}\right)  \right)
=\pi^{-1}\left(  \mathcal{J}\right)  \cap\mathcal{A}_{i}$, and $\left\Vert
b-z\right\Vert =\left\Vert w\right\Vert <\varepsilon$. It follows from part
$\left(  5\right)  $ that there is a $v\in\pi^{-1}\left(  \mathcal{J}%
_{0}\right)  \cap\mathcal{A}_{i}$ such that $\left\Vert z-v\right\Vert
\leq\varepsilon.$ Hence $\left\Vert a-v\right\Vert \leq\left\Vert
a-b\right\Vert +\left\Vert b-z\right\Vert +\left\Vert z-v\right\Vert
\leq3\varepsilon$.
\end{proof}

\bigskip

Suppose $\mathcal{A}$ is a unital C*-algebra, $\mathcal{M}\subset B\left(
H\right)  $ is a von Neumann algebra with a norm-closed ideal $\mathcal{J}$
and $\pi:\mathcal{A}\rightarrow\mathcal{M}$ is a unital $\ast$-homomorphism.
We define
\[
H_{\pi,\mathcal{J}}=\text{\textrm{sp}}^{-\left\Vert {}\right\Vert }\left(
\cup\left\{  \text{\textrm{ran}}\pi\left(  a\right)  :a\in\mathcal{A}\text{
and }\pi\left(  a\right)  \in\mathcal{J}\right\}  \right)  \text{.}%
\]
It is clear that $H_{\pi,\mathcal{J}}$ is a reducing subspace for $\pi$ and we
call the summand $\pi\left(  \cdot\right)  |_{H_{\pi,\mathcal{J}}}%
=\pi_{\mathcal{J}}$.

In Voiculescu's theorem, where $\pi,\rho:\mathcal{A}\rightarrow B\left(
H\right)  $ and $\mathcal{A}$ and $H$ are separable, we write%
\[
\pi=\pi_{\mathcal{K}\left(  H\right)  }\oplus\pi_{1},\text{ }\rho
=\rho_{\mathcal{K}\left(  H\right)  }\oplus\rho_{1}.
\]
The proof of Voiculescu's theorem involves showing
\[
\pi\sim_{a}\pi\oplus\rho_{1}=\pi_{\mathcal{K}\left(  H\right)  }\oplus\pi
_{1}\oplus\rho_{1},
\]
and%
\[
\rho\sim_{a}\rho\oplus\pi_{1}\backsimeq\rho_{\mathcal{K}\left(  H\right)
}\oplus\pi_{1}\oplus\rho_{1},
\]
which was the hard part. Using descriptions of C*-algebras of compact
operators and their representations, it is not too hard to show that the
equality of rank conditions imply that $\pi_{\mathcal{K}\left(  H\right)  }$
and $\rho_{\mathcal{K}\left(  H\right)  }$ are unitarily equivalent. When
$B\left(  H\right)  $ is replaced with a sigma-finite type $II_{\infty}$
factor von Neumann algebra $\mathcal{M}$ and $\mathcal{K}\left(  H\right)  $
is replaced with the closed ideal $\mathcal{K}_{\mathcal{M}}$ generated by the
finite projections, the hard part is harder (and unsolved) and the easy part
is not true.

In a deep and beautiful paper \cite{LSS} of Qihui Li, Junhao Shen, and Rui Shi
proved the best-to-date attack of the hard part.

\begin{theorem}
\label{nuclear}\cite{LSS} Suppose $\mathcal{A}$ is a separable nuclear
C*-algebra, $\mathcal{M}$ is a sigma-finite type $II_{\infty}$ factor von
Neumann algebra and $\pi,\sigma:\mathcal{A}\rightarrow\mathcal{M}$ are unital
$\ast$-homomorphisms such that
\[
\sigma|_{\pi^{-1}\left(  \mathcal{K}_{\mathcal{M}}\right)  }=0.
\]
Then
\[
\pi\sim_{a}\pi\oplus\sigma\text{ }\left(  \mathcal{K}_{\mathcal{M}}\right)
\text{ .}%
\]

\end{theorem}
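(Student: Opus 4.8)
The plan is to carry the proof of Voiculescu's theorem (Theorem~\ref{rank}), in Arveson's formulation, over to the type $II_{\infty}$ setting, replacing the compact operators by $\mathcal{K}_{\mathcal{M}}$ and the finite-rank operators by $\mathcal{F}_{\mathcal{M}}$, and using the nuclearity of $\mathcal{A}$ to supply finite-dimensional completely positive approximations where, classically, one invokes the structure theory of representations of $\mathcal{K}(\ell^{2})$.

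First I would set up the standard reduction. Realize $\pi\oplus\sigma$ as a unital $\ast$-homomorphism $\mathcal{A}\to\mathcal{M}$ by writing $1=e_{1}+e_{2}$ with $e_{1}\sim e_{2}\sim 1$ (possible since $\mathcal{M}$ is a $II_{\infty}$ factor) and transporting $\pi$ and $\sigma$ onto $e_{1}\mathcal{M}e_{1}$ and $e_{2}\mathcal{M}e_{2}$. Unwinding the definition of $\sim_{a}$ $(\mathcal{K}_{\mathcal{M}})$, it then suffices, for each finite $F\subseteq\mathcal{A}$ and each $\varepsilon>0$, to produce a unitary $U\in\mathcal{M}$ with $U\pi(a)U^{\ast}-(\pi\oplus\sigma)(a)\in\mathcal{K}_{\mathcal{M}}$ for all $a\in\mathcal{A}$ and $\|U\pi(a)U^{\ast}-(\pi\oplus\sigma)(a)\|<\varepsilon$ for $a\in F$. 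By the standard Voiculescu--Arveson argument this is in turn achieved by patching together a sequence of ``finite-trace almost-intertwiners,'' where a quasicentral approximate unit does the plumbing that lets the pieces, together with the finite-trace complement (which lies in $\mathcal{K}_{\mathcal{M}}$ by Lemma~\ref{ideal}), fill out $1$ modulo $\mathcal{K}_{\mathcal{M}}$.

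The technical core is thus (i) a quasicentral approximate unit and (ii) a weak-Voiculescu lemma. For (i), fix an approximate unit $\{e_{\lambda}\}$ of $\mathcal{K}_{\mathcal{M}}$ and, by Arveson's averaging argument applied to the separable C*-algebra $\pi(\mathcal{A})$, arrange that it is asymptotically central: $\|e_{\lambda}\pi(a)-\pi(a)e_{\lambda}\|\to 0$ for every $a\in\mathcal{A}$, while $\chi_{(\delta,1]}(e_{\lambda})\in\mathcal{F}_{\mathcal{M}}$ for each $\delta>0$. For (ii), given a finite set $F\subseteq\mathcal{A}$, a number $\varepsilon>0$, and a finite projection $P_{0}\in\mathcal{M}$, one must produce a partial isometry $W\in\mathcal{M}$ with $W^{\ast}W$ a finite projection, $WW^{\ast}$ orthogonal to $P_{0}$, and $\|W\sigma(a)-\pi(a)W\|<\varepsilon$ together with $\|W^{\ast}\pi(a)W-\sigma(a)W^{\ast}W\|<\varepsilon$ for $a\in F$. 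Here nuclearity enters: writing $\mathrm{id}_{\mathcal{A}}=\lim_{n}\psi_{n}\circ\varphi_{n}$ with $\varphi_{n}:\mathcal{A}\to\mathbb{M}_{k_{n}}(\mathbb{C})$ and $\psi_{n}:\mathbb{M}_{k_{n}}(\mathbb{C})\to\mathcal{A}$ unital completely positive, a Stinespring dilation inside $\mathcal{M}$ reduces the construction of $W$ to finding a finite projection in $\mathcal{M}$ carrying a prescribed finite multiplicity of a compression of $\pi$. The hypothesis $\sigma|_{\pi^{-1}(\mathcal{K}_{\mathcal{M}})}=0$ is exactly what forces these dilating projections to have finite trace and, via quasicentrality (so that $\chi_{(\delta,1]}(e_{\lambda})$ eventually dominates $P_{0}$ while the part of $\sigma$ not ``seen through $e_{\lambda}$'' contributes nothing to $\pi$ modulo $\mathcal{K}_{\mathcal{M}}$), to be pushable out past $P_{0}$; $W$ is then extracted, after a small perturbation, from the near-isometric intertwiner that this produces.

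I expect step (ii) to be the main obstacle. The Stinespring and completely-positive bookkeeping is genuinely more delicate when ``finite rank'' is replaced by ``finite trace in $\mathcal{M}$'': one must keep all the relevant range projections in $\mathcal{F}_{\mathcal{M}}$ while arranging that a countable family of them be chosen mutually almost orthogonal, so that after telescoping the patched isometry differs from $\pi\oplus\sigma$ only by an element of $\mathcal{K}_{\mathcal{M}}$ rather than of $\mathcal{M}$. Granting the lemma, the assembly is routine: using that $\mathcal{A}$ is separable, fix a sequence dense in the unit ball and apply the lemma inductively with $\varepsilon_{n}\downarrow 0$, with $F_{n}$ increasing to a dense set, and with $P_{0}$ at stage $n$ chosen to dominate $\sum_{m<n}W_{m}W_{m}^{\ast}$; the resulting $W_{n}$ then have almost orthogonal finite-trace ranges and, interlaced with the $e_{\lambda}$'s, patch---after small orthogonalizing perturbations---into the desired unitary $U$. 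Running this over the directed set of pairs $(F,\varepsilon)$ yields $\pi\sim_{a}\pi\oplus\sigma$ $(\mathcal{K}_{\mathcal{M}})$.
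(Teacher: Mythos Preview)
The paper does not contain a proof of this statement: Theorem~\ref{nuclear} is quoted from \cite{LSS} and used as a black box, so there is no ``paper's own proof'' to compare your proposal against. The authors explicitly describe \cite{LSS} as ``a deep and beautiful paper'' that ``proved the best-to-date attack of the hard part,'' and they invoke the result without reproducing any argument.

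As for your outline on its own terms: what you have written is a plausible high-level plan in the spirit of the Arveson formulation of Voiculescu's theorem, and the ingredients you name (a quasicentral approximate unit in $\mathcal{K}_{\mathcal{M}}$, a Voiculescu-type absorption lemma built from completely positive factorizations through matrix algebras, and an inductive patching) are indeed the ones that drive the proof in \cite{LSS}. But it is only a plan, not a proof. You yourself flag step~(ii) as ``the main obstacle'' and then do not carry it out; the sentence ``a Stinespring dilation inside $\mathcal{M}$ reduces the construction of $W$ to finding a finite projection in $\mathcal{M}$ carrying a prescribed finite multiplicity of a compression of $\pi$'' hides essentially all of the work. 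In the $II_{\infty}$ setting one cannot simply import the $B(H)$ argument: the Stinespring dilation of a u.c.p.\ map into $\mathcal{M}$ need not land back in $\mathcal{M}$, and arranging the dilating projections to be finite in $\mathcal{M}$, mutually almost orthogonal, and movable past a prescribed $P_{0}$ requires genuine control that your sketch does not supply. If you want this to stand as a proof rather than a citation, you would need to state and prove the $II_{\infty}$ analogue of the Voiculescu absorption lemma precisely, which is exactly the content of \cite{LSS}.
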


\bigskip

The following is a fairly general version of the analogue of the "easy part"
of the proof of Voiculescu's theorem when the C*-algebra is ASH. In
particular, there is no assumption that the von Neumann algebra $\mathcal{M}$
is a sigma-finite or acts on a separable Hilbert space.

\begin{theorem}
\label{main}Suppose $\mathcal{A}$ is a separable unital ASH C*-algebra,
$\mathcal{M}\subset B\left(  H\right)  $ is a von Neumann algebra with a norm
closed two-sided ideal $\mathcal{J}$. Suppose $\pi,\rho:\mathcal{A}%
\rightarrow\mathcal{M}$ are unital $\ast$-homomorphisms such that

\begin{enumerate}
\item Every projection in $\mathcal{J}$ is finite,

\item $\mathcal{M}$-rank$\left(  \pi\left(  a\right)  \right)  =\mathcal{M}%
$-rank$\left(  \rho\left(  a\right)  \right)  $ for every $a\in\mathcal{A}$.
\end{enumerate}

Then there is a sequence $\left\{  W_{n}\right\}  $ of partial isometries in
$\mathcal{M}$ such that

\begin{enumerate}
\item[3.] $W_{n}^{\ast}W_{n}$ is the projection onto $H_{\pi,\mathcal{J}}$ and
$W_{n}W_{n}^{\ast}$ is the projection onto $H_{\rho,\mathcal{J},},$

\item[4.] $W_{n}\pi_{\mathcal{J}}\left(  a\right)  W_{n}^{\ast}-\rho
_{\mathcal{J}}\left(  a\right)  \in\mathcal{J}$ for every $n\in\mathbb{N}$ and
every $a\in\mathcal{A}$,

\item[5.] $\lim_{n\rightarrow\infty}\left\Vert W_{n}\pi_{\mathcal{J}}\left(
a\right)  W_{n}^{\ast}-\rho_{\mathcal{J}}\left(  a\right)  \right\Vert =0$ for
every $a\in\mathcal{A}$.
\end{enumerate}
\end{theorem}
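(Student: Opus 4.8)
The plan is to strip off the obvious discrepancies by conjugation, reduce to an absorption statement inside a semifinite von Neumann algebra, and then run the ASH directed system through a subhomogeneous ``building block'' whose proof rests on a center-valued trace computation on finite corners. \emph{Reductions.} Put $\mathcal{I}=\pi^{-1}\left(  \mathcal{J}\right)  $. If $p\sim q$ in $\mathcal{M}$ and $p\in\mathcal{J}$, then $q=vpv^{\ast}\in\mathcal{J}$, so hypothesis $(2)$ and Lemma \ref{ideal}$(3)$ give $\rho^{-1}\left(  \mathcal{J}\right)  =\mathcal{I}$. Since $\mathcal{A}$ is separable, $\mathcal{I}$ is $\sigma$-unital; choosing a strictly positive $h\in\mathcal{I}$ with $0\leq h\leq1$ and using that $\pi|_{\mathcal{I}}$ and $\rho|_{\mathcal{I}}$ act nondegenerately on $H_{\pi,\mathcal{J}}$ and $H_{\rho,\mathcal{J}}$, we get $\mathfrak{R}\left(  \pi\left(  h\right)  \right)  =P_{\pi}$ and $\mathfrak{R}\left(  \rho\left(  h\right)  \right)  =P_{\rho}$, the projections onto those subspaces, so $P_{\pi}\sim P_{\rho}$ by $(2)$. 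Fixing a partial isometry $V$ with $V^{\ast}V=P_{\pi}$, $VV^{\ast}=P_{\rho}$, replacing $\pi_{\mathcal{J}}$ by $V\pi_{\mathcal{J}}\left(  \cdot\right)  V^{\ast}$ and recovering $W_{n}$ as $U_{n}V$ at the end, we may assume $H_{\pi,\mathcal{J}}=H_{\rho,\mathcal{J}}=:K$. Set $\mathcal{N}=P_{K}\mathcal{M}P_{K}$ and $\mathcal{J}_{\mathcal{N}}=\mathcal{J}\cap\mathcal{N}$; every projection in $\mathcal{J}_{\mathcal{N}}$ is still finite, and since $P_{K}$ is a supremum of finite projections of $\mathcal{M}$ (the spectral projections $\chi_{\left(  \varepsilon,\infty\right)  }\left(  \left\vert \pi\left(  a\right)  \right\vert \right)  $ for $a\in\mathcal{I}$, $\varepsilon>0$, which are dominated by scalar multiples of $\left\vert \pi\left(  a\right)  \right\vert \in\mathcal{J}$), $\mathcal{N}$ is semifinite. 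So it is enough to treat unital $\ast$-homomorphisms $\pi,\rho:\mathcal{A}\rightarrow\mathcal{N}$ acting nondegenerately on $\mathcal{I}$, with $\pi\left(  \mathcal{I}\right)  ,\rho\left(  \mathcal{I}\right)  \subseteq\mathcal{J}_{\mathcal{N}}$ and $\mathcal{N}$-rank$\left(  \pi\left(  x\right)  \right)  =\mathcal{N}$-rank$\left(  \rho\left(  x\right)  \right)  $ for $x\in\mathcal{I}$ (Murray--von Neumann equivalence of subprojections of $P_{K}$ is the same in $\mathcal{M}$ and in $\mathcal{N}$), and to produce unitaries $U_{n}\in\mathcal{N}$ with $U_{n}\pi\left(  a\right)  U_{n}^{\ast}-\rho\left(  a\right)  \in\mathcal{J}_{\mathcal{N}}$ for all $n,a$ and $\left\Vert U_{n}\pi\left(  a\right)  U_{n}^{\ast}-\rho\left(  a\right)  \right\Vert \rightarrow0$ for all $a$.

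\emph{The building block.} Write $\mathcal{A}=\overline{\bigcup_{m}\mathcal{A}_{m}}$ with $\mathcal{A}_{m}$ an increasing sequence of unital subhomogeneous C*-subalgebras, and $\mathcal{I}_{m}=\mathcal{I}\cap\mathcal{A}_{m}$, so $\mathcal{I}=\overline{\bigcup_{m}\mathcal{I}_{m}}$ by Lemma \ref{ideal}. The engine is: if $\mathcal{D}$ is unital subhomogeneous and $\varphi,\psi:\mathcal{D}\rightarrow\mathcal{N}$ are unital, act nondegenerately on an ideal $\mathcal{E}$ of $\mathcal{D}$ with $\varphi\left(  \mathcal{E}\right)  ,\psi\left(  \mathcal{E}\right)  \subseteq\mathcal{J}_{\mathcal{N}}$ and $\mathcal{N}$-rank$\left(  \varphi\left(  x\right)  \right)  =\mathcal{N}$-rank$\left(  \psi\left(  x\right)  \right)  $ on $\mathcal{E}$, then for every finite $F\subseteq\mathcal{D}$ and $\varepsilon>0$ there is a unitary $U\in\mathcal{N}$ with $U\varphi\left(  a\right)  U^{\ast}-\psi\left(  a\right)  \in\mathcal{J}_{\mathcal{N}}$ for every $a\in\mathcal{D}$ and $\left\Vert U\varphi\left(  a\right)  U^{\ast}-\psi\left(  a\right)  \right\Vert <\varepsilon$ for $a\in F$. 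To prove it, extend $\varphi,\psi$ to normal $\ast$-homomorphisms on $\mathcal{D}^{\# \#}$, which is homogeneous, say $\bigoplus_{j}\mathbb{M}_{k_{j}}\left(  \mathcal{Z}_{j}\right)  $ with the $\mathcal{Z}_{j}$ abelian; matching systems of matrix units as in Lemma \ref{LF}$(1)$ reduces the problem to comparing, on corners of $\mathcal{N}$, the normal representations of the $\mathcal{Z}_{j}$ that arise. For each finite subset of $\mathcal{E}$ the images under $\varphi$ and $\psi$ lie in $e\mathcal{N}e$ for a single finite projection $e\in\mathcal{J}_{\mathcal{N}}$ (a finite join of finite projections is finite), and $e\mathcal{N}e$ is a finite von Neumann algebra, on which the center-valued trace together with the rank hypothesis and the argument of Lemma \ref{trace} force the relevant spectral projections of the two normal representations to coincide. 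Patching these, in the spirit of Lemma \ref{LF}$(2)$, first gives unitaries realizing the match on $\mathcal{E}$ with $\mathcal{J}_{\mathcal{N}}$-control; the unique nondegenerate extension of these representations then carries all of $\varphi$ to $\psi$ modulo $\mathcal{J}_{\mathcal{N}}$, yielding $U$.

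\emph{Assembly.} Applying the building block along the tower $\left\{  \mathcal{A}_{m}\right\}  $ inside the quotient $\mathcal{N}/\mathcal{J}_{\mathcal{N}}$ (writing $\dot\pi,\dot\rho$ for the induced maps, on which $\mathcal{I}$ goes to $0$) produces a unitary $w\in\mathcal{N}/\mathcal{J}_{\mathcal{N}}$ with $w\dot\pi\left(  a\right)  w^{\ast}=\dot\rho\left(  a\right)  $ for every $a\in\mathcal{A}$; semifiniteness of $\mathcal{N}$ together with the rank hypothesis removes the $K$-theoretic obstruction to lifting (the two finite defect projections of a partial isometry lifting $w$ are then equivalent), so $w$ lifts to a unitary $U_{0}\in\mathcal{N}$ and, after replacing $\pi$ by $U_{0}\pi\left(  \cdot\right)  U_{0}^{\ast}$, we may assume $\pi\left(  a\right)  -\rho\left(  a\right)  \in\mathcal{J}_{\mathcal{N}}$ for all $a\in\mathcal{A}$. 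It remains to shrink this $\mathcal{J}_{\mathcal{N}}$-valued error: for each $m$, a relative version of the classical ``easy part'' construction — using the nondegeneracy of $\pi|_{\mathcal{I}}$ to locate a large finite projection $e_{m}\in\mathcal{J}_{\mathcal{N}}$ beyond which $\pi$ and $\rho$ nearly agree on a $1/m$-net $F_{m}$ of the unit ball of $\mathcal{A}_{m}$, and conjugating by a unitary $z_{m}\in\mathcal{N}$ that is $1$ modulo $\mathcal{J}_{\mathcal{N}}$ and rotates the defect into that tail — gives $U_{m}=z_{m}$ with $U_{m}\pi\left(  a\right)  U_{m}^{\ast}-\rho\left(  a\right)  \in\mathcal{J}_{\mathcal{N}}$ for all $a$ and $\left\Vert U_{m}\pi\left(  a\right)  U_{m}^{\ast}-\rho\left(  a\right)  \right\Vert <1/m$ on $F_{m}$. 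An $\varepsilon/3$-estimate upgrades this to $\left\Vert U_{m}\pi\left(  a\right)  U_{m}^{\ast}-\rho\left(  a\right)  \right\Vert \rightarrow0$ for every $a\in\mathcal{A}$, and undoing the reductions (composing with $U_{0}$ and with $V$) delivers the partial isometries $W_{n}$.

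\emph{Main obstacle.} The heart of the argument is the building-block lemma: producing a perturbation that genuinely lies in $\mathcal{J}_{\mathcal{N}}$ on the whole subhomogeneous algebra, not merely one of small norm on a finite set, while $\mathcal{N}$ need be neither a factor nor $\sigma$-finite, so that the structure theory of representations of $\mathcal{K}\left(  H\right)  $ used in the classical ``easy part'' is unavailable and must be replaced by the center-valued trace on finite corners and the semifiniteness of $\mathcal{N}$. The lifting of the quotient unitary $w$ to a unitary of $\mathcal{N}$ — equivalently, the equivalence of the two finite defect projections — is the other delicate point, and it is exactly where hypothesis $(1)$ and the equality of ranks are used in an essential way.
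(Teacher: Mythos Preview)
Your reductions and the instinct to exploit center-valued traces on finite corners are sound, but the \emph{Assembly} step does not close. The building block, even granted, gives for each $m$ a unitary $U_m\in\mathcal{N}$ with $U_m\pi(a)U_m^{\ast}-\rho(a)\in\mathcal{J}_{\mathcal{N}}$ only for $a\in\mathcal{A}_m$; in the quotient, $\dot U_m$ intertwines $\dot\pi$ and $\dot\rho$ on $\mathcal{A}_m$, but nothing forces the $\dot U_m$ to cohere as $m\to\infty$, and approximate unitary equivalence along an inductive system does not upgrade to a single exact conjugacy without an intertwining argument --- which would require choosing the stage-$(m{+}1)$ unitary close to the stage-$m$ one, a feature your block does not supply. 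There is also a hypothesis mismatch: $\pi|_{\mathcal{A}_m}$ need not act nondegenerately on $\mathcal{I}_m=\mathcal{I}\cap\mathcal{A}_m$, so the block cannot be invoked for $\mathcal{A}_m$ as written. The subsequent lifting of $w$ (``the two finite defect projections are equivalent'') and the ``shrink the defect'' step then rest on an unproved $w$, and neither is itself justified: the defects of a lift of $w$ are not a priori in $\mathcal{J}_{\mathcal{N}}$, and their equivalence is not a consequence of the rank hypothesis on $\pi(a),\rho(a)$.

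More fundamentally, the building-block sketch conceals the whole difficulty. ``Patching these, in the spirit of Lemma~\ref{LF}(2)'' does not explain how to pass from matches on finite corners $e\mathcal{N}e$ (for finitely many elements of $\mathcal{E}$) to one unitary that works modulo $\mathcal{J}_{\mathcal{N}}$ on all of $\mathcal{D}$; Lemma~\ref{LF}(2) is about LF algebras, and $\mathcal{D}$ is not LF. The paper solves exactly this, but never via a quotient conjugacy or a lifted unitary. It builds an increasing sequence of \emph{good} projections $P_n\in\mathcal{Z}(\mathcal{A}_n^{\#\#})$ --- central in each stage's bidual, with $\hat\pi(P_n),\hat\rho(P_n)\in\mathcal{J}_0$, and on whose corners $\Phi\circ\hat\pi=\Phi\circ\hat\rho$ --- that asymptotically commute with $\mathcal{A}$; this is where the subhomogeneous structure is actually used, through a map $\Delta_n$ pushing range projections into the center. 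The differences $e_k=P_{n_k}-P_{n_{k-1}}$ then quasi-diagonalize: $a\mapsto\sum_k e_k a e_k$ differs from $a$ by something in $\hat\pi^{-1}(\mathcal{J})\cap\hat\rho^{-1}(\mathcal{J})$, and on each block $C^{\ast}(e_k\mathcal{A}e_k)$ one lives inside a genuine finite corner $E_k\mathcal{M}E_k$, where Theorem~\ref{SAFE} supplies a partial isometry $V_k$; the required $W_m$ is $\sum_k V_k$. That quasi-diagonal mechanism --- producing central, asymptotically commuting projections with the right ideal and trace control --- is the missing idea in your outline.
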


\begin{proof}
First, suppose $x\in\mathcal{A}$ and $x=x^{\ast}$. It follows from \cite{DS}
that there is a sequence $\left\{  U_{n}\right\}  $ such that
\[
\left\Vert U_{n}\pi\left(  x\right)  U_{n}^{\ast}-\rho\left(  x\right)
\right\Vert \rightarrow0.
\]
It follows that $\pi\left(  x\right)  \in\mathcal{J}$ if and only if
$\rho\left(  x\right)  \in\mathcal{J}$ when $x=x^{\ast}$. However, for any
$a\in\mathcal{A}$, we get $\pi\left(  a\right)  \in\mathcal{J}$ if and only if
$\pi\left(  \left\vert a\right\vert \right)  \in\mathcal{J}$. Hence $\pi
^{-1}\left(  \mathcal{J}\right)  =\rho^{-1}\left(  \mathcal{J}\right)  $.
Also, $\pi\left(  a\right)  \in\mathcal{J}_{0}$ if and only if $\mathfrak{R}%
\left(  \pi\left(  a\right)  \right)  \in\mathcal{J}_{0}$. Since
$\mathfrak{R}\left(  \pi\left(  a\right)  \right)  $ and $\mathfrak{R}\left(
\rho\left(  a\right)  \right)  $ are Murray von Neumann equivalent (from
$\left(  2\right)  $), we see that $\pi\left(  a\right)  \in\mathcal{J}_{0}$
if and only if $\rho\left(  a\right)  \in\mathcal{J}_{0}$. It follows that
$\pi^{-1}\left(  \mathcal{J}_{0}\right)  \cap\mathcal{A}_{n}=$ $\rho
^{-1}\left(  \mathcal{J}_{0}\right)  \cap\mathcal{A}_{n}$ for each
$n\in\mathbb{N}$, and, from Lemma \ref{ideal},%
\[
\left[  \bigcup_{n=1}^{\infty}\pi^{-1}\left(  \mathcal{J}_{0}\right)
\cap\mathcal{A}_{n}\right]  ^{-\left\Vert {}\right\Vert }=\left[
\bigcup_{n=1}^{\infty}\rho^{-1}\left(  \mathcal{J}_{0}\right)  \cap
\mathcal{A}_{n}\right]  ^{-\left\Vert {}\right\Vert }=\pi^{-1}\left(
\mathcal{J}\right)  =\rho^{-1}\left(  \mathcal{J}\right)  \text{.}%
\]

Since $\mathcal{A}$ is an ASH algebra, we can assume that there is a sequence%
\[
\mathcal{A}_{1}\subset\mathcal{A}_{2}\subset\cdots
\]
of subalgebras of $\mathcal{A}$ such that $\cup_{n=1}^{\infty}\mathcal{A}_{n}$
is norm dense in $\mathcal{A}$ such that, for each $n\in\mathbb{N}$,%
\[
\mathcal{A}_{n}^{\# \#}=\mathcal{M}_{k\left(  n,1\right)  }\left(  C\left(
X_{n,1}\right)  \oplus\cdots\oplus\mathcal{M}_{k\left(  n,s_{n}\right)
}\left(  X_{n,s_{n}}\right)  \right)
\]
with $X_{n,1},\ldots,X_{n,s_{n}}$ compact Hausdorff spaces.

Suppose $T=\left(  f_{ij}\right)  \in\mathbb{M}_{k}\left(  C\left(  X\right)
\right)  $ is a $k\times k$ matrix of functions. We define $T^{\maltese}=$
\textrm{diag}$\left(  f,f,\ldots,f\right)  $ where $f=\sum_{i,j=1}%
^{k}\left\vert f_{ij}\right\vert ^{2}.$ If $\left\{  e_{ij}:1\leq i,j\leq
n\right\}  $ is the system of matrix units for $\mathbb{M}_{n}\left(
\mathbb{C}\right)  ,$ then $T=\sum_{i,j=1}^{n}f_{ij}e_{ij}$. It is clear that
if $T\geq0,$ then $\mathfrak{R}\left(  T\right)  \leq\mathfrak{R}\left(
T^{\maltese}\right)  $. Since $f_{ij}e_{ss}=e_{si}Te_{js},$ we have
\[
\text{ }\left\vert f_{ij}\right\vert ^{2}e_{ss}=\left(  e_{si}Te_{js}\right)
^{\ast}\left(  e_{si}Te_{js}\right)  =e_{sj}T^{\ast}e_{is}e_{si}Te_{js}%
=e_{js}^{\ast}T^{\ast}e_{ii}Te_{js}.
\]
Thus
\[
T^{\maltese}=\sum_{s=1}^{g}\sum_{i,j=1}^{k}\left\vert f_{ij}\right\vert
^{2}e_{ss}=\sum_{s=1}^{g}\sum_{i,j=1}^{k}e_{js}^{\ast}T^{\ast}e_{ii}Te_{js}.
\]
Suppose $A=A_{1}\oplus\cdots\oplus A_{s_{n}}\in\mathcal{A}_{n}^{\# \#}$. We
define $\Delta_{n}:\mathcal{A}_{n}^{\# \#}\rightarrow\mathcal{Z}\left(
\mathcal{A}_{n}^{\# \#}\right)  $ by%
\[
\Delta_{n}\left(  A\right)  =A_{1}^{\maltese}\oplus\cdots\oplus A_{s_{n}%
}^{\maltese}\text{.}%
\]
Thus if $A\in\mathcal{A}_{n}^{\# \#}$, then $\Delta_{n}\left(  A\right)  $ has
the form
\[
\Delta_{n}\left(  A\right)  =\sum_{k=1}^{m}B_{k}AC_{k},
\]
with $B_{1},C_{1},\ldots,B_{m},C_{m}\in\mathcal{A}_{n}^{\# \#}$.

It is clear that

\begin{enumerate}
\item[a.] $\Delta_{n}\left(  \mathcal{A}_{n}^{\# \#}\right)  $ is contained in
the center $\mathcal{Z}\left(  \mathcal{A}_{n}^{\# \#}\right)  $ of
$\mathcal{A}_{n}^{\# \#}$, and

\item[b.] If $A\geq0,$ then $\mathfrak{R}\left(  A\right)  \leq\mathfrak{R}%
\left(  \Delta_{n}\left(  A\right)  \right)  \in\mathcal{Z}\left(
\mathcal{A}_{n}^{\# \#}\right)  $.
\end{enumerate}

We call a projection $Q\in\mathcal{A}_{n}^{\# \#}$ \textbf{good} if

\begin{enumerate}
\item[c.] $\hat{\pi}\left(  Q\right)  ,\hat{\rho}\left(  Q\right)
\in\mathcal{J}_{0}$

\item[d.] $Q\in\left[  \mathcal{A}_{n}\cap\pi^{-1}\left(  \mathcal{J}%
_{0}\right)  \right]  ^{-\text{\textrm{weak*}}}$

\item[e.] For all $T\in Q\mathcal{A}^{\# \#}Q$, $\mathcal{M}$-rank$\left(
\hat{\pi}\left(  T\right)  \right)  =\mathcal{M}$-rank$\left(  \hat{\rho
}\left(  T\right)  \right)  .$
\end{enumerate}

Our proof is based on four claims.

\textbf{Claim 0:} Suppose $Q_{1},Q_{2}\in\mathcal{A}_{n}^{\# \#}$ are good
projections and $Q_{1}\perp Q_{2}$. Then $Q=Q_{1}+Q_{2}$ is a good projection.
It is clear that $Q$ satisfies $\left(  c\right)  $ and $\left(  d\right)  $.
Let $P=\hat{\pi}\left(  Q\right)  \vee\hat{\rho}\left(  Q\right)
\in\mathcal{J}_{0}$. Thus $P$ is a finite projection in $\mathcal{M}$, so
$P\mathcal{M}P$ is a finite von Neumann algebra. Let $\Phi_{P}:P\mathcal{M}%
P\rightarrow\mathcal{Z}\left(  P\mathcal{M}P\right)  $ be the center-valued
trace. Since $Q_{1}$ and $Q_{2}$ are good, we know from Lemma \ref{trace}
that
\[
\Phi_{P}\circ\hat{\pi}|_{Q_{k}\mathcal{A}^{\# \#}Q_{k}}=\Phi_{P}\circ\hat
{\rho}|_{Q_{k}\mathcal{A}^{\# \#}Q_{k}}%
\]
for $k=1,2.$ Since $Q_{1}\perp Q_{2}$, we know $\hat{\pi}\left(  Q_{1}\right)
\perp\hat{\pi}\left(  Q_{2}\right)  $ and $\hat{\rho}\left(  Q_{1}\right)
\perp\hat{\rho}\left(  Q_{2}\right)  $. Thus if $1\leq i\neq j\leq2$, then,
since $\Phi_{P}$ is tracial, if $A\in\mathcal{A}^{\# \#}$, then,%
\[
\Phi_{P}\left(  \hat{\pi}\left(  Q_{i}AQ_{j}\right)  \right)  =\Phi_{P}\left(
\hat{\pi}\left(  Q_{i}\right)  \hat{\pi}\left(  A\right)  \hat{\pi}\left(
Q_{j}\right)  ^{2}\right)
\]%
\[
=\Phi_{P}\left(  \hat{\pi}\left(  Q_{j}\right)  \hat{\pi}\left(  Q_{i}\right)
\hat{\pi}\left(  A\right)  \hat{\pi}\left(  Q_{j}\right)  \right)  =0.
\]
Similarly,%
\[
\Phi_{P}\left(  \hat{\rho}\left(  Q_{i}AQ_{j}\right)  \right)  =0.
\]
Thus%
\[
\Phi_{P}\left(  \hat{\pi}\left(  QAQ\right)  \right)  =\Phi_{P}\left(
\hat{\pi}\left(  Q_{1}AQ_{1}\right)  \right)  +\Phi_{P}\left(  \hat{\pi
}\left(  Q_{2}AQ_{2}\right)  \right)
\]%
\[
=\Phi_{P}\left(  \hat{\rho}\left(  Q_{1}AQ_{1}\right)  \right)  +\Phi
_{P}\left(  \hat{\rho}\left(  Q_{2}AQ_{2}\right)  \right)  \text{ .}%
\]
Thus, by Lemma \ref{trace}, $Q$ satisfies $\left(  e\right)  $. Hence $Q$ is a
good projection. This proves the claim. A simple induction proof implies that
the sum of a finite pairwise orthogonal family of good projections is good.

\textbf{Claim 1:} If $Q\in\mathcal{A}_{n}^{\# \#}$ is a good projection, then
there is a good projection $P\in\mathcal{Z}\left(  \mathcal{A}_{n}^{\#
\#}\right)  $ such that $Q\leq P.$

\textbf{Proof:} Suppose $Q\in\mathcal{A}_{n}^{\# \#}$ is a good projection.
Choose $B_{1},C_{1},\ldots,B_{k},C_{k}$ in $\mathcal{A}_{n}^{\# \#}$ such that%
\[
E\underset{\text{\textrm{def}}}{=}\sum_{k=1}^{m}B_{k}QC_{k}=\Delta_{n}\left(
Q\right)  \in\mathcal{Z}\left(  \mathcal{A}_{n}^{\# \#}\right)  .
\]
Since $\mathfrak{R}\left(  E\right)  \in\mathcal{Z}\left(  \mathcal{A}_{n}^{\#
\#}\right)  $ and $E\geq0$, we see that%
\[
E=\mathfrak{R}\left(  E\right)  E\mathfrak{R}\left(  E\right)  =\sum_{k=1}%
^{m}\left[  \mathfrak{R}\left(  E\right)  B_{k}\mathfrak{R}\left(  E\right)
\right]  Q\left[  \mathfrak{R}\left(  E\right)  C_{k}\mathfrak{R}\left(
E\right)  \right]  .
\]
Hence we can assume, for $1\leq k\leq m$, that $B_{k},C_{k}\in\mathfrak{R}%
\left(  E\right)  \mathcal{A}^{\# \#}\mathfrak{R}\left(  E\right)  .$

Since $\hat{\pi}\left(  Q\right)  $, $\hat{\rho}\left(  Q\right)
\in\mathcal{J}_{0}$, we see that $\hat{\pi}\left(  E\right)  $ and $\hat{\rho
}\left(  E\right)  \in\mathcal{J}_{0}$, which, in turn, implies $\hat{\pi
}\left(  \mathfrak{R}\left(  E\right)  \right)  $ and $\hat{\rho}\left(
\mathfrak{R}\left(  E\right)  \right)  \in\mathcal{J}_{0}$. Then $F=\hat{\pi
}\left(  \mathfrak{R}\left(  E\right)  \right)  \vee\hat{\rho}\left(
\mathfrak{R}\left(  E\right)  \right)  \in\mathcal{J}_{0}$ is a finite
projection. Thus $F\mathcal{M}F$ is a finite von Neumann algebra. Also, since,
for $1\leq k\leq m$, $B_{k},C_{k}\in$ $\mathfrak{R}\left(  E\right)
\mathcal{A}_{n}^{\# \#}\mathfrak{R}\left(  E\right)  ,$ we see that $\hat{\pi
}\left(  B_{k}QC_{k}\right)  ,\hat{\rho}\left(  B_{k}QC_{k}\right)  \in
F\mathcal{M}F$. Let $\Phi_{F}$ be the center-valued trace on $F\mathcal{M}F$.
Since $Q$ is a good projection and in $E\mathcal{A}^{\# \#}E$, we know from
Lemma \ref{trace}, that for every $A\in\mathcal{A}^{\# \#}$,%
\[
\Phi_{F}\left(  \hat{\pi}\left(  QAQ\right)  \right)  =\Phi_{F}\left(
\hat{\rho}\left(  QAQ\right)  \right)  .
\]
Now $\hat{\pi},\hat{\rho}:E\mathcal{A}^{\# \#}E\rightarrow F\mathcal{M}F$ are
$\ast$-homomorphisms, and, since $\Phi_{F}$ is tracial, we see for
$A\in\mathcal{A}^{\# \#}$,
\[
\Phi_{F}\left(  \hat{\pi}\left(  EAE\right)  \right)  =
\]%
\[
=\sum_{j,k=1}^{m}\Phi_{F}\left(  \left[  \hat{\pi}\left(  B_{k}\right)
\hat{\pi}\left(  Q\right)  \right]  \left[  \hat{\pi}\left(  Q\right)
\hat{\pi}\left(  C_{k}\right)  \hat{\pi}\left(  A\right)  \hat{\pi}\left(
B_{j}\right)  \hat{\pi}\left(  Q\right)  \hat{\pi}\left(  C_{j}\right)
\right]  \right)
\]%
\[
=\sum_{j,k=1}^{m}\Phi_{F}\left(  \left[  \hat{\pi}\left(  Q\right)  \hat{\pi
}\left(  C_{k}\right)  \hat{\pi}\left(  A\right)  \hat{\pi}\left(
B_{j}\right)  \hat{\pi}\left(  Q\right)  \hat{\pi}\left(  C_{j}\right)
\right]  \left[  \hat{\pi}\left(  B_{k}\right)  \hat{\pi}\left(  Q\right)
\right]  \right)
\]%
\[
=\sum_{j,k=1}^{m}\Phi_{F}\left(  \hat{\pi}\left(  QC_{k}AB_{j}QC_{j}%
B_{k}Q\right)  \right)  =\sum_{j,k=1}^{m}\Phi_{F}\left(  \hat{\rho}\left(
QC_{k}AB_{j}QC_{j}B_{k}Q\right)  \right)
\]%
\[
=\sum_{j,k=1}^{m}\Phi_{F}\left(  \left[  \hat{\rho}\left(  Q\right)  \hat
{\rho}\left(  C_{k}\right)  \hat{\rho}\left(  A\right)  \hat{\rho}\left(
B_{j}\right)  \hat{\rho}\left(  Q\right)  \hat{\rho}\left(  C_{j}\right)
\right]  \left[  \hat{\rho}\left(  B_{k}\right)  \hat{\rho}\left(  Q\right)
\right]  \right)
\]%
\[
=\Phi_{F}\left(  \hat{\rho}\left(  EAE\right)  \right)  \text{.}%
\]
Thus $\Phi_{F}\circ\hat{\pi}=\Phi_{F}\circ\hat{\rho}$ on $E\mathcal{A}^{\#
\#}E$, and since $\hat{\pi},\hat{\rho},$ and $\Phi_{F}$ are weak* continuous,
we have $\Phi_{F}\circ\hat{\pi}=\Phi_{F}\circ\hat{\rho}$ on $\left(
E\mathcal{A}^{\# \#}E\right)  ^{-\text{\textrm{weak}*}}=\mathfrak{R}\left(
E\right)  \mathcal{A}^{\# \#}\mathfrak{R}\left(  E\right)  $.

Finally, since $\left[  \mathcal{A}_{n}\cap\pi^{-1}\left(  \mathcal{J}%
_{0}\right)  \right]  ^{-\text{\textrm{weak*}}}$ is a weak* closed $\ast
$-algebra, and an ideal for $\mathcal{A}_{n}^{\# \#}$, we see that
\[
E=\Delta_{n}\left(  Q\right)  =\sum_{k=1}^{m}B_{k}QC_{k}\in\left[
\mathcal{A}_{n}\cap\pi^{-1}\left(  \mathcal{J}_{0}\right)  \right]
^{-\text{\textrm{weak*}}},
\]
so $P=\mathfrak{R}\left(  E\right)  \in\left[  \mathcal{A}_{n}\cap\pi
^{-1}\left(  \mathcal{J}_{0}\right)  \right]  ^{-\text{\textrm{weak*}}}$. Thus
$P=\mathfrak{R}\left(  E\right)  \in\mathcal{Z}\left(  \mathcal{A}_{n}^{\#
\#}\right)  $ is a good projection and $Q\leq P$.

\textbf{Claim 2: }If $Q_{1},Q_{2}\in\mathcal{A}_{n}^{\# \#}$ are good
projections, then there is a good projection $Q\in\mathcal{Z}\left(
\mathcal{A}_{n}^{\# \#}\right)  $ such that $Q_{1},Q_{2}\leq Q$.

\textbf{Proof:} By Claim 1 we can choose good projections $P_{1},P_{2}%
\in\mathcal{Z}\left(  \mathcal{A}_{n}^{\# \#}\right)  $ such that $Q_{1}\leq
P_{1}$ and $Q_{2}\leq P_{2}$. Since $P_{1}$ and $P_{2}$ commute and
$P_{1}\left(  1-P_{2}\right)  \leq P_{1}$, $P_{1}P_{2}\leq P_{1}$ and $\left(
1-P_{1}\right)  P_{2}\leq P_{2}$, we see that $\left\{  P_{1}\left(
1-P_{2}\right)  ,P_{1}P_{2},\left(  1-P_{1}\right)  P_{2}\right\}  $ is an
orthogonal family of good projections. Thus, by Case 0,
\[
Q=P_{1}\vee P_{2}=P_{1}\left(  1-P_{2}\right)  +P_{1}P_{2}+\left(
1-P_{1}\right)  P_{2}%
\]
is a good projection in $\mathcal{Z}\left(  \mathcal{A}^{\# \#}\right)  $.
Thus Claim 2 is proved.

\textbf{Claim 3:} If $0\leq x\in\mathcal{A}_{n}\cap\pi^{-1}\left(
\mathcal{J}_{0}\right)  $, then $\mathfrak{R}\left(  \Delta_{n}\left(
x\right)  \right)  \in\mathcal{Z}\left(  \mathcal{A}_{n}^{\# \#}\right)  $ is good.

\textbf{Proof}: We know that $\hat{\pi}\left(  \mathfrak{R}\left(  x\right)
\right)  $ and $\hat{\rho}\left(  \mathfrak{R}\left(  x\right)  \right)  $ are
Murray von Neumann equivalent and $\mathcal{M}$-rank$\left(  \pi\left(
x\right)  \right)  $ and $\mathcal{M}$-rank$\left(  \rho\left(  x\right)
\right)  $ are equal. Since $\pi\left(  x\right)  \in\mathcal{J}_{0}$, we know
$\hat{\pi}\left(  \mathfrak{R}\left(  x\right)  \right)  $,$\hat{\rho}\left(
\mathfrak{R}\left(  x\right)  \right)  \in\mathcal{J}_{0}$. Arguing as in the
proof of Claim 1, we see that $F=\hat{\pi}\left(  \mathfrak{R}\left(
x\right)  \right)  \vee\hat{\rho}\left(  \mathfrak{R}\left(  x\right)
\right)  \in\mathcal{J}_{0}$ and that%
\[
\hat{\pi},\hat{\rho}:\left[  x\mathcal{A}x\right]  ^{-\left\Vert {}\right\Vert
}\rightarrow F\mathcal{M}F
\]
satisfy $\Phi_{F}\circ\hat{\pi}=\Phi_{F}\circ\hat{\rho}$. Thus $\Phi_{F}%
\circ\hat{\pi}=\Phi_{F}\circ\hat{\rho}$ on $\left[  x\mathcal{A}x\right]
^{-\text{\textrm{weak*}}}=\mathfrak{R}\left(  x\right)  \mathcal{A}^{\#
\#}\mathfrak{R}\left(  x\right)  $. Thus $\mathfrak{R}\left(  x\right)  $ is a
good projection. This proves Claim 3.

We can choose a countable dense set $\left\{  b_{1},b_{2},\ldots\right\}  $ of
$\cup_{n=1}^{\infty}\left(  \mathcal{A}_{n}\cap\pi^{-1}\left(  \mathcal{J}%
_{0}\right)  \right)  $ whose closure is $\pi^{-1}\left(  \mathcal{J}\right)
$.

We now want to define a sequence $0=P_{0}\leq P_{1}\leq P_{2}\leq\cdots$ of
good projections such that

\begin{enumerate}
\item $P_{n}\in\mathcal{Z}\left(  \mathcal{A}_{n}^{\# \#}\right)  $ for all
$n\in\mathbb{N}$,

\item If $1\leq k\leq n$ and $b_{k}\in\mathcal{A}_{n}$, then $\mathfrak{R}%
\left(  b_{k}\right)  \leq P_{n}$, i.e.,
\[
b_{k}=P_{n}b_{k}%
\]

\end{enumerate}

Define $P_{0}=0$. Suppose $n\in\mathbb{N}$ and $P_{k}$ has been defined for
$0\leq k\leq n$. We let $x_{n}=\sum_{k\leq n+1,b_{k}\in\mathcal{A}_{n+1}}%
b_{k}b_{k}^{\ast}\in\mathcal{A}_{n+1}\cap\pi^{-1}\left(  \mathcal{J}%
_{0}\right)  $. Thus, by Claim 3, $P_{n}$ and $\mathfrak{R}\left(
\Delta_{n+1}\left(  x_{n}\right)  \right)  $ are good projections in
$\mathcal{A}_{n}^{\# \#}$, and they commute since $\mathfrak{R}\left(
\Delta_{n+1}\left(  x_{n}\right)  \right)  \in\mathcal{Z}\left(
\mathcal{A}_{n+1}^{\# \#}\right)  $. By Claim 2, there is a good projection
$P_{n+1}\in\mathcal{Z}\left(  \mathcal{A}_{n+1}^{\# \#}\right)  $ such that
$P_{n}\leq P_{n+1}$ and $\mathfrak{R}\left(  \Delta_{n+1}\left(  x_{n}\right)
\right)  \leq P_{n+1}$. Clearly, if $1\leq k\leq n$ and $b_{k}\in
\mathcal{A}_{n}$, we have $\mathfrak{R}\left(  b_{k}\right)  =\mathfrak{R}%
\left(  b_{k}b_{k}^{\ast}\right)  \leq\mathfrak{R}\left(  x_{n}\right)  \leq
P_{n+1}$.

Since $P_{n}$ is a good projection, $P_{n}\in\left[  \mathcal{A}_{n}\cap
\pi^{-1}\left(  \mathcal{J}_{0}\right)  \right]  ^{-\text{\textrm{weak*}}}.$
Thus%
\[
P_{n}\leq\sup\left\{  \mathfrak{R}\left(  x\right)  :x\in\mathcal{A}_{n}%
\cap\pi^{-1}\left(  \mathcal{J}_{0}\right)  \right\}  \in\mathcal{A}_{n}^{\#
\#}.
\]
Thus $\hat{\pi}\left(  P_{n}\right)  \leq P_{\pi,\mathcal{J}}$ (the projection
onto $H_{\pi,\mathcal{J}}$) and $\hat{\rho}\left(  P_{n}\right)  \leq
P_{\rho,\mathcal{J}}$ (the projection onto $H_{\rho,\mathcal{J}}$). Let
$P_{e}=\lim_{n\rightarrow\infty}P_{n}$ (weak*). Thus $\hat{\pi}\left(
P_{e}\right)  \leq P_{\pi,\mathcal{J}}$ and $\hat{\rho}\left(  P_{e}\right)
\leq P_{\rho,\mathcal{J}}$. On the other hand, for every $k\in\mathbb{N}$,%
\[
\lim_{n\rightarrow\infty}\left\Vert b_{k}-P_{n}b_{k}\right\Vert =0.
\]
This implies
\[
P_{e}b=b\text{ for every }b\in\left[  \pi^{-1}\left(  \mathcal{J}\right)
\right]  ^{-\left\Vert {}\right\Vert }\text{ .}%
\]

Thus $\hat{\pi}\left(  P_{e}\right)  =P_{\pi,\mathcal{J}}$ and $\hat{\rho
}\left(  P_{e}\right)  =P_{\rho,\mathcal{J}}$. Thus $P_{\pi,\mathcal{J}}$ and
$P_{\rho,\mathcal{J}}$ are Murray von Neumann equivalent.

Since $P_{n}\in\mathcal{A}_{n}^{\prime}$ for each $n\in\mathbb{N}$, we have of
every $A\in\cup_{k=1}^{\infty}\mathcal{A}_{k}$,%
\[
\lim_{n\rightarrow\infty}\left\Vert AP_{n}-P_{n}A\right\Vert =0.
\]
Hence,
\[
\lim_{n\rightarrow\infty}\left\Vert AP_{n}-P_{n}A\right\Vert =0
\]
holds for every $A\in\mathcal{A}$.

Choose a dense subset $\left\{  A_{1},A_{2},\ldots\right\}  $ of $\mathcal{A}%
$. Suppose and $m\in\mathbb{N}$. It follows that we can choose a subsequence
$\left\{  P_{n_{k}}\right\}  $ of $\left\{  P_{n}\right\}  $ such that, for
all $1\leq n<\infty,$%
\[
\sum_{k=1}^{\infty}\left\Vert A_{n}P_{n_{k}}-P_{n_{k}}A_{n}\right\Vert
<\infty\text{,}%
\]
and, for $1\leq n\leq m$,%
\[
\sum_{k=1}^{\infty}\left\Vert A_{n}P_{n_{k}}-P_{n_{k}}A_{n}\right\Vert
<\frac{1}{8m}\text{ .}%
\]
Define $e_{k}=P_{n_{k}}-P_{n_{k-1}}$ (with $P_{n_{0}}=0$) and define
$\varphi:\mathcal{A}\rightarrow\sum_{1\leq k<\infty}^{\oplus}e_{k}%
\mathcal{A}e_{k}$ by%
\[
\varphi\left(  T\right)  =\sum_{k=1}^{\infty}e_{k}Te_{k}\text{ .}%
\]
It follows from \cite[page 903]{PRH} that the above conditions on $\left\Vert
A_{n}P_{n_{k}}-P_{n_{k}}A_{n}\right\Vert $ that, for all $k\in\mathbb{N}$,%
\[
A_{k}-\varphi\left(  A_{k}\right)  \in\hat{\pi}^{-1}\left(  \mathcal{J}%
\right)  \cap\hat{\rho}^{-1}\left(  \mathcal{J}\right)  \text{ }%
\]
and
\[
\left\Vert P_{e}A_{n}-\varphi\left(  A_{n}\right)  \right\Vert <\frac{1}%
{4m}\text{ .}%
\]
for $1\leq n\leq m$.

Suppose $k\in\mathbb{N}$. For each $n\geq n_{k}$, $e_{k}\mathcal{A}_{n}%
e_{k}\subset\mathcal{A}_{n}^{\# \#}$, which is homogeneous. Hence $C^{\ast
}\left(  e_{k}\mathcal{A}_{n}e_{k}\right)  $ is subhomogeneous. Thus $C^{\ast
}\left(  e_{k}\mathcal{A}e_{k}\right)  $ is ASH. If we let

$E_{k}=\hat{\pi}\left(  e_{k}\right)  \vee\hat{\rho}\left(  e_{k}\right)  $
for each $k\in\mathbb{N}$, we have $E_{k}$ is a finite projection,
$E_{k}\mathcal{M}E_{k}$ is a finite von Neumann algebra,%
\[
\hat{\pi},\hat{\rho}:C^{\ast}\left(  e_{k}\mathcal{A}e_{k}\right)  \rightarrow
E_{k}\mathcal{M}E_{k},
\]
and, if Let $\Phi_{E_{k}}$ is the center-valued trace on $E_{k}\mathcal{M}%
E_{k}$, then
\[
\Phi_{E_{k}}\circ\left(  \hat{\pi}|_{C^{\ast}\left(  e_{k}\mathcal{A}%
e_{k}\right)  }\right)  =\Phi_{E_{k}}\circ\left(  \hat{\rho}|_{C^{\ast}\left(
e_{k}\mathcal{A}e_{k}\right)  }\right)  ,
\]
and $C^{\ast}\left(  e_{k}\mathcal{A}e_{k}\right)  $ is ASH, it follows from
Theorem \ref{SAFE} that%
\[
\hat{\pi}|_{C^{\ast}\left(  e_{k}\mathcal{A}e_{k}\right)  }\sim_{a}\hat{\rho
}|_{C^{\ast}\left(  e_{k}\mathcal{A}e_{k}\right)  \text{ }}\text{ }\left(
E_{k}\mathcal{M}E_{k}\right)  .
\]
Since $\hat{\pi}\left(  e_{k}\right)  $ and $\hat{\rho}\left(  e_{k}\right)  $
are projections, then by \cite[Proposition 5.2.6]{W}, any unitary that
conjugates $\hat{\pi}\left(  e_{k}\right)  $ to a projection that is really
close to $\hat{\rho}\left(  e_{k}\right)  $ is close to a unitary that
conjugates $\hat{\pi}\left(  e_{k}\right)  $ exactly to $\hat{\rho}\left(
e_{k}\right)  $. We can therefore, for each $k\in\mathbb{N}$, choose a unitary
$U_{k}\in E_{k}\mathcal{M}E_{k}$ such that
\[
\left\Vert U_{k}\hat{\pi}\left(  e_{k}a_{n}e_{k}\right)  U_{k}^{\ast}%
-\hat{\rho}\left(  e_{k}a_{n}e_{k}\right)  \right\Vert <\frac{1}{4km}%
\]
when $1\leq n\leq k+m<\infty$, and such that
\[
U_{k}\hat{\pi}\left(  e_{k}\right)  U_{k}^{\ast}=\rho\left(  e_{k}\right)  .
\]

For each $k\in\mathbb{N}$, let $V_{k}=U_{k}\hat{\pi}\left(  e_{k}\right)  $.
Then $V_{k}$ is a partial isometry whose initial projection is $\hat{\pi
}\left(  e_{k}\right)  =V_{k}^{\ast}V_{k}$ and final projection is $\hat{\rho
}\left(  e_{k}\right)  =V_{k}V_{k}^{\ast}$. Also%
\[
\left\Vert V_{k}\hat{\pi}\left(  e_{k}\right)  \pi\left(  a_{n}\right)
\hat{\pi}\left(  e_{k}\right)  V_{k}^{\ast}-\hat{\rho}\left(  e_{k}\right)
\rho\left(  a_{n}\right)  \hat{\rho}\left(  e_{k}\right)  \right\Vert
<\frac{1}{4km}%
\]
for $1\leq n\leq k+m<\infty$. Then $W_{m}=\sum_{k=1}^{\infty}V_{k}$ is a
partial isometry in $\mathcal{M}$ with initial projection $\hat{\pi}\left(
P_{e}\right)  =P_{\pi,\mathcal{J}}$ and final projection $\hat{\rho}\left(
P_{e}\right)  =P_{\rho,\mathcal{J}}$. Moreover,
\[
W_{m}\hat{\pi}\left(  \varphi\left(  a_{n}\right)  \right)  W_{m}^{\ast}%
=\sum_{1\leq k<\infty}^{\oplus}V_{k}\hat{\pi}\left(  e_{k}a_{n}e_{k}\right)
V_{k}^{\ast},
\]
and
\[
\hat{\rho}\left(  \varphi\left(  a_{n}\right)  \right)  =\sum_{1\leq k<\infty
}^{\oplus}\hat{\rho}\left(  e_{k}a_{n}e_{k}\right)  .
\]
Since $V_{k}\hat{\pi}\left(  e_{k}a_{n}e_{k}\right)  V_{k}^{\ast}$, $\hat
{\rho}\left(  e_{k}a_{n}e_{k}\right)  \in\mathcal{J}$ for each $n,k\in
\mathbb{N}$ and since%
\[
\lim_{k\rightarrow\infty}\left\Vert V_{k}\hat{\pi}\left(  e_{k}a_{n}%
e_{k}\right)  V_{k}^{\ast}-\hat{\rho}\left(  e_{k}a_{n}e_{k}\right)
\right\Vert =0,
\]
we see that
\[
W_{m}\hat{\pi}\left(  \varphi\left(  a_{n}\right)  \right)  W_{m}^{\ast}%
-\hat{\rho}\left(  \varphi\left(  a_{n}\right)  \right)  \in\mathcal{J}%
\]
for every $n\in\mathbb{N}$. Also,
\[
\left\Vert W_{m}\hat{\pi}\left(  \varphi\left(  a_{n}\right)  \right)
W_{m}^{\ast}-\hat{\rho}\left(  \varphi\left(  a_{n}\right)  \right)
\right\Vert <\frac{1}{4m}%
\]
for $1\leq n\leq m$.

Also
\[
\hat{\pi}\left(  \varphi\left(  a_{n}\right)  \right)  -\pi\left(
a_{n}\right)  =\hat{\pi}\left(  \varphi\left(  a_{n}\right)  -a_{n}\right)
\in\mathcal{J}%
\]
and
\[
\hat{\pi}\left(  \varphi\left(  a_{n}\right)  \right)  -\rho\left(
a_{n}\right)  =\hat{\rho}\left(  \varphi\left(  a_{n}\right)  -a_{n}\right)
\in\mathcal{J}%
\]
for every $n\in\mathbb{N}$ and
\[
\left\Vert \hat{\pi}\left(  \varphi\left(  a_{n}\right)  \right)  -\pi\left(
a_{n}\right)  \right\Vert <\frac{1}{4m}\text{ and }\left\Vert \hat{\rho
}\left(  \varphi\left(  a_{n}\right)  \right)  -\rho\left(  a_{n}\right)
\right\Vert <\frac{1}{4m}%
\]
for $1\leq n\leq m$.

For each $n\in\mathbb{N}$,%
\[
W_{m}\pi\left(  a_{n}\right)  W_{m}^{\ast}-\rho\left(  a_{n}\right)
\]%
\[
=\left[  W_{m}\left(  \pi\left(  a_{n}\right)  -\hat{\pi}\left(
\varphi\left(  a_{n}\right)  \right)  \right)  W_{m}^{\ast}\right]  +\left[
W_{m}\hat{\pi}\left(  \varphi\left(  a_{n}\right)  \right)  W_{m}^{\ast}%
-\hat{\rho}\left(  \varphi\left(  a_{n}\right)  \right)  \right]
\]%
\[
+\hat{\rho}\left(  \varphi\left(  a_{n}\right)  \right)  -\rho\left(
a_{n}\right)  .
\]
Thus, for every $n\in\mathbb{N}$,%
\[
W_{m}\pi\left(  a_{n}\right)  W_{m}^{\ast}-\rho\left(  a_{n}\right)
\in\mathcal{J}.
\]
Also, for $1\leq n\leq m$,%
\[
\left\Vert W_{m}\pi\left(  a_{n}\right)  W_{m}^{\ast}-\rho\left(
a_{n}\right)  \right\Vert <\frac{1}{m}\text{ .}%
\]
It follows, for every $a\in\mathcal{A}$, that
\[
W_{m}\hat{\pi}\left(  \varphi\left(  a\right)  \right)  W_{m}^{\ast}-\hat
{\rho}\left(  \varphi\left(  a\right)  \right)  \in\mathcal{J}%
\]
and%
\[
\lim_{m\rightarrow\infty}\left\Vert W_{m}\pi\left(  a\right)  W_{m}^{\ast
}-\rho\left(  a\right)  \right\Vert =0\text{.}%
\]

\end{proof}

\begin{remark}
In two cases, namely, when $H_{\pi,\mathcal{J}}=H_{\rho,\mathcal{J}}=H,$ or
when $\pi\left(  \cdot\right)  |_{H_{\pi,\mathcal{J}}^{\perp}}$ and
$\rho\left(  \cdot\right)  |_{H_{\rho,\mathcal{J}}^{\perp}}$ are unitarily
equivalent, the conclusion in Theorem \ref{main} becomes%
\[
\pi\sim_{a}\rho\text{ }\left(  \mathcal{J}\right)  \text{ .}%
\]
\bigskip
\end{remark}

When $\mathcal{A}$ is a separable ASH C*-algebra and $\mathcal{M}$ is a
sigma-finite $II_{\infty}$ factor von Neumann algebra, we can use Theorems
\ref{main} and \ref{nuclear} to have both parts of Voiculescu's theorem,
including an extension of results in \cite{DH}. \bigskip

\begin{corollary}
Suppose $\mathcal{A}$ is a separable ASH C*-algebra, $\mathcal{M}$ is a
sigma-finite type $II_{\infty}$ factor von Neumann algebra on a Hilbert space
$H$, and $\tau$ is a faithful normal tracial weight on $\mathcal{M}$. Suppose
$\pi,\rho:\mathcal{A}\rightarrow\mathcal{M}$ are unital $\ast$-homomorphisms
such that, for every $a\in\mathcal{A}$%
\[
\mathcal{M}\text{-\textrm{rank}}\left(  \pi\left(  a\right)  \right)
=\mathcal{M}\text{-\textrm{rank}}\left(  \rho\left(  a\right)  \right)
\text{.}%
\]
Then $\pi\sim_{a}\rho$ $\left(  \mathcal{K}_{\mathcal{M}}\right)  $.
\end{corollary}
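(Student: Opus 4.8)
The plan is to take $\mathcal{J}=\mathcal{K}_{\mathcal{M}}$ and combine the "easy part'' (Theorem \ref{main}) with the "hard part'' (Theorem \ref{nuclear}) in the standard Voiculescu two-sided-absorption manner. Two preliminary checks make these theorems applicable. First, a separable ASH C*-algebra is nuclear: a subhomogeneous C*-algebra is type I, hence nuclear, and an inductive limit of nuclear C*-algebras is nuclear, so Theorem \ref{nuclear} applies to $\mathcal{A}$. Second, every projection in $\mathcal{K}_{\mathcal{M}}$ is finite --- a standard fact about $\sigma$-finite type $II_{\infty}$ factors, since $\mathcal{K}_{\mathcal{M}}$ contains no projection of infinite trace --- so hypothesis (1) of Theorem \ref{main} holds.

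Apply Theorem \ref{main} with $\mathcal{J}=\mathcal{K}_{\mathcal{M}}$ to obtain partial isometries $W_{n}\in\mathcal{M}$ with $W_{n}^{\ast}W_{n}=P_{\pi,\mathcal{J}}$, $W_{n}W_{n}^{\ast}=P_{\rho,\mathcal{J}}$, with $W_{n}\pi_{\mathcal{J}}(a)W_{n}^{\ast}-\rho_{\mathcal{J}}(a)\in\mathcal{K}_{\mathcal{M}}$ and $\left\Vert W_{n}\pi_{\mathcal{J}}(a)W_{n}^{\ast}-\rho_{\mathcal{J}}(a)\right\Vert \rightarrow 0$ for all $a$; in particular $P_{\pi,\mathcal{J}}\sim P_{\rho,\mathcal{J}}$. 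Write $\pi=\pi_{\mathcal{J}}\oplus\pi_{1}$ and $\rho=\rho_{\mathcal{J}}\oplus\rho_{1}$, where $\pi_{1}=\pi(\cdot)|_{H_{\pi,\mathcal{J}}^{\perp}}$ and $\rho_{1}=\rho(\cdot)|_{H_{\rho,\mathcal{J}}^{\perp}}$. Since $\mathcal{K}_{\mathcal{M}}$ is self-adjoint and $H_{\pi,\mathcal{J}}$ reduces $\pi$, any $a$ with $\pi(a)\in\mathcal{K}_{\mathcal{M}}$ has $\mathrm{ran}\,\pi(a)$ and $\mathrm{ran}\,\pi(a)^{\ast}$ both inside $H_{\pi,\mathcal{J}}$, so $\pi_{1}(a)=0$; likewise $\rho_{1}$ vanishes on $\rho^{-1}(\mathcal{K}_{\mathcal{M}})$. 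By the opening paragraph of the proof of Theorem \ref{main}, $\pi^{-1}(\mathcal{K}_{\mathcal{M}})=\rho^{-1}(\mathcal{K}_{\mathcal{M}})$, so $\pi_{1}$ and $\rho_{1}$ both annihilate this common ideal.

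Now run the absorption argument, all direct sums being formed inside $\mathcal{M}$ via an amplification $\mathcal{M}\otimes\mathbb{M}_{2}(\mathbb{C})\cong\mathcal{M}$, which is legitimate since $\mathcal{M}$ is type $II_{\infty}$. Because $\rho_{1}|_{\pi^{-1}(\mathcal{K}_{\mathcal{M}})}=0$, Theorem \ref{nuclear} gives $\pi\sim_{a}\pi\oplus\rho_{1}$ $(\mathcal{K}_{\mathcal{M}})$; symmetrically $\rho\sim_{a}\rho\oplus\pi_{1}$ $(\mathcal{K}_{\mathcal{M}})$. On the other hand $\pi\oplus\rho_{1}=\pi_{\mathcal{J}}\oplus\pi_{1}\oplus\rho_{1}$ and $\rho\oplus\pi_{1}=\rho_{\mathcal{J}}\oplus\rho_{1}\oplus\pi_{1}$, and conjugating by $W_{n}\oplus I\oplus I$ (a unitary between the relevant Hilbert spaces, using $P_{\pi,\mathcal{J}}\sim P_{\rho,\mathcal{J}}$) followed by the flip unitary interchanging the last two summands shows $\pi\oplus\rho_{1}\sim_{a}\rho\oplus\pi_{1}$ $(\mathcal{K}_{\mathcal{M}})$. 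Chaining $\pi\sim_{a}\pi\oplus\rho_{1}\sim_{a}\rho\oplus\pi_{1}\sim_{a}\rho$ and using transitivity of $\sim_{a}(\mathcal{K}_{\mathcal{M}})$ (by composing the unitary sequences, each conjugation error lying in the two-sided ideal $\mathcal{K}_{\mathcal{M}}$ and tending to $0$ in norm) yields $\pi\sim_{a}\rho$ $(\mathcal{K}_{\mathcal{M}})$.

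The substance of the result lies entirely in the two cited theorems; the only care needed here is the bookkeeping --- checking ASH $\Rightarrow$ nuclear and that projections in $\mathcal{K}_{\mathcal{M}}$ are finite, and upgrading the partial isometries of Theorem \ref{main} to genuine unitaries after adjoining the complementary summands $\pi_{1},\rho_{1}$, which is where $P_{\pi,\mathcal{J}}\sim P_{\rho,\mathcal{J}}$ and the $II_{\infty}$ amplification are used. The genuinely hard point --- the $II_{\infty}$ analogue of Voiculescu absorption --- has been subsumed into Theorem \ref{nuclear}, so no obstacle of real depth remains in this corollary.
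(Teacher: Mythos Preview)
Your proof is correct and follows essentially the same approach as the paper: decompose $\pi=\pi_{\mathcal{K}_{\mathcal{M}}}\oplus\pi_{1}$ and $\rho=\rho_{\mathcal{K}_{\mathcal{M}}}\oplus\rho_{1}$, use Theorem~\ref{nuclear} to absorb $\rho_{1}$ into $\pi$ and $\pi_{1}$ into $\rho$, and then invoke Theorem~\ref{main} to match $\pi_{\mathcal{K}_{\mathcal{M}}}$ with $\rho_{\mathcal{K}_{\mathcal{M}}}$ modulo $\mathcal{K}_{\mathcal{M}}$. The paper leaves the Corollary without explicit proof and gives only a two-line argument for the more general Theorem~\ref{ASHmain}; your version simply fills in the bookkeeping (ASH $\Rightarrow$ nuclear, finiteness of projections in $\mathcal{K}_{\mathcal{M}}$, how the partial isometries $W_n$ become unitaries once the complementary summands are adjoined) that the paper takes for granted.
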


\begin{theorem}
\label{ASHmain}Suppose $\mathcal{M}\subset B\left(  H\right)  $ is a
semifinite von Neumann algebra with no finite summands, $H$ is separable, and
$\mathcal{A}$ is a separable unital ASH C*-algebra. Also suppose $\pi
,\rho:\mathcal{A}\rightarrow\mathcal{M}$ are unital $\ast$-homomorphisms such
that, for every $a\in\mathcal{A}$%
\[
\mathcal{M}\text{-\textrm{rank}}\left(  \pi\left(  a\right)  \right)
=\mathcal{M}\text{-\textrm{rank}}\left(  \rho\left(  a\right)  \right)  \text{
.}%
\]
Then $\pi\sim_{a}\rho$ $\left(  \mathcal{K}_{\mathcal{M}}\right)  $.
\end{theorem}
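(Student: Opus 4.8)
The plan is to reduce the theorem, via a direct-integral decomposition of $\mathcal{M}$ over its center, to the two factor cases already in hand: Voiculescu's theorem (Theorem~\ref{rank}) and the $II_{\infty}$-factor Corollary above. First one records the normalizations: an ASH C*-algebra is separable and nuclear, and the rank hypothesis forces $\ker\pi=\ker\rho$ (since $\pi(a)=0$ iff $\mathfrak{R}(\pi(a))=0$ iff $\mathfrak{R}(\rho(a))=0$ iff $\rho(a)=0$). Since $\mathcal{M}$ is semifinite with no finite summand and $H$ is separable, $\mathcal{M}$ is properly infinite with separable predual; decomposing $H=\int^{\oplus}H_{t}\,d\mu(t)$ over $\mathcal{Z}(\mathcal{M})$ gives $\mathcal{M}=\int^{\oplus}\mathcal{M}_{t}\,d\mu(t)$ with $\mathcal{M}_{t}$ a factor for a.e.\ $t$, and, since no central cutdown of $\mathcal{M}$ is finite, a.e.\ $\mathcal{M}_{t}$ is a separably acting (hence $\sigma$-finite) type $I_{\infty}$ or type $II_{\infty}$ factor. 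Because $\mathcal{Z}(\mathcal{M})\subset\pi(\mathcal{A})'\cap\rho(\mathcal{A})'$ and $\mathcal{A}$ is separable, $\pi$ and $\rho$ disintegrate into measurable fields of unital $\ast$-homomorphisms $\pi_{t},\rho_{t}:\mathcal{A}\rightarrow\mathcal{M}_{t}$, and, running over a countable dense subset of $\mathcal{A}$, the hypothesis $\mathfrak{R}(\pi(a))\sim\mathfrak{R}(\rho(a))$ for all $a$ disintegrates to $\mathcal{M}_{t}\text{-}\mathrm{rank}(\pi_{t}(a))=\mathcal{M}_{t}\text{-}\mathrm{rank}(\rho_{t}(a))$ for all $a\in\mathcal{A}$ and a.e.\ $t$.

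On almost every fibre the appropriate mod-$\mathcal{K}$ approximate-equivalence theorem now applies: Theorem~\ref{rank} (the implication $(3)\Rightarrow(1)$) on the type $I_{\infty}$ fibres, each isomorphic to $B(K)$ with $K$ separable, and the Corollary above on the type $II_{\infty}$ fibres. Fixing a countable dense set $\{a_{1},a_{2},\dots\}\subset\mathcal{A}$, this produces for each $n\in\mathbb{N}$ and a.e.\ $t$ a unitary $V_{t}^{(n)}\in\mathcal{M}_{t}$ with $\|V_{t}^{(n)}\pi_{t}(a_{j})(V_{t}^{(n)})^{\ast}-\rho_{t}(a_{j})\|<1/n$ and $V_{t}^{(n)}\pi_{t}(a_{j})(V_{t}^{(n)})^{\ast}-\rho_{t}(a_{j})\in\mathcal{K}_{\mathcal{M}_{t}}$ for $1\le j\le n$. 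Since both Theorem~\ref{rank} and the Corollary are proved by producing explicit approximating sequences, a measurable-selection argument --- applied to the Borel field of such unitaries, with the ideal membership phrased as closeness to a finite-projection-supported element of controlled norm --- lets $t\mapsto V_{t}^{(n)}$ be chosen measurably, so $V^{(n)}=\int^{\oplus}V_{t}^{(n)}\,d\mu(t)$ is a unitary in $\mathcal{M}$.

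It then remains to check that $\{V^{(n)}\}$ witnesses $\pi\sim_{a}\rho$ $(\mathcal{K}_{\mathcal{M}})$. The norm estimate is immediate, since $\|V^{(n)}\pi(a_{j})(V^{(n)})^{\ast}-\rho(a_{j})\|$ equals the essential supremum over $t$ of $\|V_{t}^{(n)}\pi_{t}(a_{j})(V_{t}^{(n)})^{\ast}-\rho_{t}(a_{j})\|$, which is $<1/n$ for $j\le n$; hence $\|V^{(n)}\pi(a)(V^{(n)})^{\ast}-\rho(a)\|\rightarrow0$ for every $a\in\mathcal{A}$. For the ideal membership one uses the elementary fact that a bounded measurable field $(T_{t})$ with $T_{t}\in\mathcal{K}_{\mathcal{M}_{t}}$ a.e.\ has $\int^{\oplus}T_{t}\,d\mu(t)\in\mathcal{K}_{\mathcal{M}}$: a projection in $\int^{\oplus}\mathcal{M}_{t}\,d\mu(t)$ is finite iff a.e.\ fibre is finite, so $\mathcal{F}_{\mathcal{M}}$ is exactly the set of fields with a.e.\ fibre in $\mathcal{F}_{\mathcal{M}_{t}}$, and one approximates $T_{t}$ fibrewise and measurably, within any $\varepsilon>0$, by elements of $\mathcal{F}_{\mathcal{M}_{t}}$ of norm $\le\|T\|+\varepsilon$. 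Applying this to $T=V^{(n)}\pi(a_{j})(V^{(n)})^{\ast}-\rho(a_{j})$ gives $T\in\mathcal{K}_{\mathcal{M}}$ for every $j$, and hence, since $\mathcal{K}_{\mathcal{M}}$ is norm closed and $\{a_{j}\}$ is dense, for every $a\in\mathcal{A}$, so $\pi\sim_{a}\rho$ $(\mathcal{K}_{\mathcal{M}})$.

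The step I expect to be the main obstacle is the measurable selection of the fibrewise unitaries with uniform-in-$t$ control of both the norm error and the $\mathcal{K}$-membership: one must either run the proofs of Voiculescu's theorem and of the Corollary with enough uniformity to exhibit a Borel field of admissible unitaries over the base, or encode ``$V$ conjugates $\pi_{t}(a_{1}),\dots,\pi_{t}(a_{n})$ to within $1/n$, modulo a norm-$\le R$ finite-projection-supported element, of $\rho_{t}(a_{1}),\dots,\rho_{t}(a_{n})$'' as a Borel condition on $(t,V,R)$ with almost-everywhere nonempty sections and invoke von Neumann's selection theorem. The remainder --- disintegrating $\pi$, $\rho$ and the rank relation, and the fibrewise description of $\mathcal{K}_{\mathcal{M}}$ --- is routine direct-integral bookkeeping.
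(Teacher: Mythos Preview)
Your approach is workable but takes a genuinely different route from the paper's. The paper argues globally: it splits $\pi=\pi_{\mathcal{K}_{\mathcal{M}}}\oplus\pi_{1}$ and $\rho=\rho_{\mathcal{K}_{\mathcal{M}}}\oplus\rho_{1}$, applies the Li--Shen--Shi absorption (Theorem~\ref{nuclear}) directly in $\mathcal{M}$ to obtain $\pi\sim_{a}\pi_{\mathcal{K}_{\mathcal{M}}}\oplus\pi_{1}\oplus\rho_{1}$ and $\rho\sim_{a}\rho_{\mathcal{K}_{\mathcal{M}}}\oplus\pi_{1}\oplus\rho_{1}$ $(\mathcal{K}_{\mathcal{M}})$, and then invokes Theorem~\ref{main}---which is already stated for an \emph{arbitrary} von Neumann algebra with a norm-closed ideal whose projections are finite---to match the compact summands $\pi_{\mathcal{K}_{\mathcal{M}}}$ and $\rho_{\mathcal{K}_{\mathcal{M}}}$. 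No direct integrals and no measurable selections enter; the non-factor structure is handled once and for all by the generality built into Theorem~\ref{main}. Your route instead pushes everything down to the factor Corollary and to Voiculescu's theorem fibre by fibre, which is conceptually tidy and has the modest advantage of only needing the $II_{\infty}$ results on factors, but it trades the paper's single technical investment (the proof of Theorem~\ref{main}) for the direct-integral and selection machinery you correctly flag as the main obstacle.

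One small slip in your write-up: you only require $V_{t}^{(n)}\pi_{t}(a_{j})(V_{t}^{(n)})^{\ast}-\rho_{t}(a_{j})\in\mathcal{K}_{\mathcal{M}_{t}}$ for $1\le j\le n$, yet afterwards you assert $V^{(n)}\pi(a_{j})(V^{(n)})^{\ast}-\rho(a_{j})\in\mathcal{K}_{\mathcal{M}}$ for \emph{every} $j$, which is what the definition of $\pi\sim_{a}\rho$ $(\mathcal{K}_{\mathcal{M}})$ actually demands of each individual unitary. This is easily repaired, since both Theorem~\ref{rank} and the Corollary produce unitaries giving ideal membership for all $a$ simultaneously; just impose that condition for all $j$ when you select $V_{t}^{(n)}$.
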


\begin{proof}
We can write $\pi=\pi_{\mathcal{K}_{\mathcal{M}}}\oplus\pi_{1}$ and $\rho
=\rho_{\mathcal{K}_{\mathcal{M}}}\oplus\rho_{1}$. It follows from Theorem
\ref{nuclear} that
\[
\pi\sim_{a}\pi_{\mathcal{K}_{\mathcal{M}}}\oplus\pi_{1}\oplus\rho_{1}\text{
}\left(  \mathcal{K}_{\mathcal{M}}\right)  \text{ and }\rho\sim_{a}%
\rho_{\mathcal{K}_{\mathcal{M}}}\oplus\pi_{1}\oplus\rho_{1}\text{ }\left(
\mathcal{K}_{\mathcal{M}}\right)  \text{.}%
\]
It follows from Theorem \ref{main} that $\pi\sim_{a}\rho$ $\left(
\mathcal{K}_{\mathcal{M}}\right)  $.
\end{proof}

\end{document}